\newtheorem{theorem}{Theorem}
\newtheorem{lemma}{Lemma}
\newtheorem{corollary}{Corollary}
\newtheorem{proposition}{Proposition}
\theoremstyle{remark}
\newtheorem{remark}{Remark}
\newcommand{\R}{\mathbb{R}}
\begin{document}

\title[A Frostman type lemma for sets with large intersections]{A
  Frostman type lemma for sets with large intersections, and an
  application to Diophantine approximation}

\author{Tomas Persson} \address{Tomas Persson\\ Centre for
  Mathematical Sciences\\ Lund University\\ Box 118\\ 22100
  Lund\\ Sweden} \email{tomasp@maths.lth.se}

\author{Henry W. J. Reeve} \address{Henry W. J. Reeve\\Department of
  Mathematics\\ The University of Bristol\\ University
  Walk\\Clifton\\ Bristol\\BS8 1TW\\UK} \email{henrywjreeve@gmail.com}

\begin{abstract}
  We consider classes $\mathscr{G}^s ([0,1])$ of subsets of $[0,1]$,
  originally introduced by Falconer, that are closed under countable
  intersections, and such that every set in the class has Hausdorff
  dimension at least $s$. We provide a Frostman type lemma to
  determine if a limsup-set is in such a class. Suppose $E = \limsup
  E_n \subset [0,1]$, and that $\mu_n$ are probability measures with
  support in $E_n$. If there is a constant $C$ such that
  \[
  \iint |x - y|^{-s} \, \mathrm{d}\mu_n (x) \mathrm{d} \mu_n (y) < C
  \]
  for all $n$, then under suitable conditions on the limit measure of
  the sequence $(\mu_n)$, we prove that the set $E$ is in the class
  $\mathscr{G}^s ([0,1])$.

  As an application we prove that for $\alpha > 1$ and almost all
  $\lambda \in (\frac{1}{2}, 1)$ the set
  \[
  E_\lambda (\alpha) = \{\, x \in [0,1] : |x - s_n| < 2^{-\alpha n}
  \text{ infinitely often} \, \}
  \]
  where $s_n \in \{\, (1 - \lambda) \sum_{k=0}^n a_k \lambda^k$ and
  $a_k \in \{0, 1\} \,\}$, belongs to the class $\mathscr{G}^s$ for $s
  \leq \frac{1}{\alpha}$. This improves one of our previous results in
  \cite{persson-reeve}.
\end{abstract}

\subjclass[2010]{11J83, 11K55, 28A78, 28A80}

\maketitle

\section{Introduction and Results}

\subsection{Intersection classes}

Falconer introduced in \cite{falconerold} classes $\mathscr{G}^s$ of
subsets of $\R^n$ with the property that any set in $\mathscr{G}^s$
has Hausdorff dimension at least $s$, and countable intersections of
bi-Lipschitz images of sets from $\mathscr{G}^s$, are in
$\mathscr{G}^s$. There are several equivalent ways to characterise the
sets in $\mathscr{G}^s$, see \cite{falconernew}. We will use below a
variant from Bugeaud \cite{bugeaud1}. (There is a minor mistake in the
corresponding part in \cite{falconernew}.)

We define the set functions $\mathscr{M}_\infty^t$ on arbitrary sets
$E \subset \R^n$ as
\[
\mathscr{M}_\infty^t (E) = \inf \biggl\{ \, \sum_i |D_i|^t : E \subset
\bigcup_i D_i \, \biggr\},
\]
where each $D_i$ is a dyadic hypercube. According to \cite{bugeaud1},
$\mathscr{G}^s$ is the class of $G_\delta$ sets $E$ such that for each
$t < s$, there is a constant $c$ such that
\begin{equation} \label{eq:defM}
  \mathscr{M}_\infty^t (E \cap D) \geq c \mathscr{M}_\infty^t (D)
\end{equation}
holds for all dyadic cubes $D$.

If $E_n$ are open sets and $E = \limsup E_n$, then \eqref{eq:defM}
holds and $E$ is in $\mathscr{G}^s$, provided that
\begin{equation} \label{eq:liminfM}
\liminf_{n \to \infty} \mathscr{M}_\infty^t (E_n \cap D) \geq c |D|^t
\end{equation}
holds for all dyadic cubes $D$, see \cite{falconernew}. We will use a
small variation of this result, as stated in
Section~\ref{sec:intersectionclasslemma}.

In this paper, we will consider subsets of the interval $[0,1]$. Since
no subset of $[0,1]$ belongs to the class $\mathscr{G}^s$, we
introduce instead the class $\mathscr{G}^s ([0,1])$, which is the
analog of $\mathscr{G}^s$ for subsets of $[0,1]$. The class
$\mathscr{G}^s ([0,1])$ is the class of $G_\delta$ subsets $E$ of
$[0,1]$ such that if we deploy copies of $E$, translated by an
integer, along the real line, then we get a set that belongs to the
class $\mathscr{G}^s$. Equivalently, $\mathscr{G}^s ([0,1])$ can be
defined as $\mathscr{G}^s$, using \eqref{eq:defM}, where we instead
require that \eqref{eq:defM} holds for all dyadic cubes $D$ that are
subsets of $[0,1]$. With the same change, \eqref{eq:liminfM} can be
used to determine if a set belongs to the class $\mathscr{G}^s
([0,1])$.

Our first result is the following theorem, that gives a method to
determine if a limsup-set belongs to the class $\mathscr{G}^s
([0,1])$.

\begin{theorem} \label{the:frostman}
  Let $E_k$ be open subsets of $[0,1]$, and $\mu_k$ Borel probability
  measures, with support in the closure of $E_k$, that converge weakly
  to a measure $\mu$ with density $h$ in $L^2$. Assume that $\mu (I) >
  0$ for all intervals $I \subset [0, 1]$ with non-empty interior, and
  assume that for each $\varepsilon > 0$, there is a constant
  $C_\varepsilon$, such that
  \begin{equation} \label{eq:est_on_limitmeasure}
    |I|^{1+\varepsilon} \lVert h \chi_I \rVert_2^2 \leq C_\varepsilon
    \lVert h \chi_I \rVert_1^2
  \end{equation}
  holds for any interval $I \subset [0,1]$. If there is a constant $C$
  such that
  \begin{equation} \label{eq:bounded_energy}
    \iint |x - y|^{-s} \, \mathrm{d} \mu_k (x) \mathrm{d} \mu_k (y)
    \leq C
  \end{equation}
  holds for all $k$, then $\limsup E_k$ is in the class
  $\mathscr{G}^s ([0,1])$.
\end{theorem}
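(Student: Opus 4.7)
By the variant of \eqref{eq:liminfM} to be stated in Section~\ref{sec:intersectionclasslemma}, it suffices to prove that for every $t<s$ there is a constant $c>0$, depending only on $t$, such that
\[
  \liminf_{k\to\infty}\mathscr{M}_\infty^t(E_k\cap D) \geq c\,|D|^t
\]
for every dyadic interval $D\subset[0,1]$. I would obtain such bounds by Frostman duality: producing a measure $\nu$ supported in $\overline{E_k}\cap D$ with $\nu(J)\leq|J|^t$ for every dyadic interval $J$ and $\nu(\overline{E_k}\cap D)\geq c|D|^t$ implies the required estimate.

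\textbf{Frostman truncation and passage to the limit.} Fix $t<s$ and a dyadic $D$. Because $|x-y|\leq 1$ on $[0,1]^2$, the hypothesis \eqref{eq:bounded_energy} yields $I_t(\mu_k|_D)\leq I_s(\mu_k)\leq C$, and the standard Frostman truncation, combined with the centre-shifting trick, produces a sub-measure of $\mu_k|_D$ that is $t$-Frostman with constant proportional to $I_t(\mu_k|_D)/\mu_k(D)$ while retaining at least half the mass of $\mu_k|_D$. Frostman duality then supplies a universal $c_0>0$ with $\mathscr{M}_\infty^t(E_k\cap D)\geq c_0\,\mu_k(D)^2/I_t(\mu_k|_D)$. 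The $L^2$ density of $\mu$ ensures that $\partial D$ is $\mu$-null, so weak convergence gives $\mu_k(D)\to\mu(D)$, and the positivity hypothesis keeps $\mu(D)>0$. Splitting the $t$-energy integral at a diagonal scale $\varepsilon$ and controlling the near-diagonal contribution uniformly by $\varepsilon^{s-t}C$ via \eqref{eq:bounded_energy} yields $I_t(\mu_k|_D)\to I_t(\mu|_D)$, and hence
\[
  \liminf_{k\to\infty}\mathscr{M}_\infty^t(E_k\cap D) \geq c_0\,\frac{\mu(D)^2}{I_t(\mu|_D)}.
\]

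\textbf{Energy estimate via \eqref{eq:est_on_limitmeasure}.} To convert the previous bound into one of the form $c|D|^t$, I would prove $I_t(\mu|_D)\leq C\mu(D)^2|D|^{-t}$. A layer-cake expansion writes $I_t(\mu|_D)=t\int_0^\infty r^{-t-1}F(r)\,\mathrm{d}r$ with $F(r)=\int_D h(x)\mu(B(x,r)\cap D)\,\mathrm{d}x$; the tail $r\geq|D|$ contributes $\mu(D)^2|D|^{-t}/t$, and for $r\leq|D|$ two applications of Cauchy--Schwarz give $F(r)\leq 2r\|h\chi_D\|_2^2$. Substituting the reverse-H\"older inequality \eqref{eq:est_on_limitmeasure} at scale $|D|$ then yields $I_t(\mu|_D)\leq C'_\varepsilon\mu(D)^2|D|^{-t-\varepsilon}$ for each $\varepsilon>0$.

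\textbf{Main obstacle.} Applying \eqref{eq:est_on_limitmeasure} at only the single scale $|D|$ produces an $\varepsilon$-loss in the exponent, giving $\liminf_k\mathscr{M}_\infty^t(E_k\cap D)\geq c_\varepsilon|D|^{t+\varepsilon}$ rather than the required $c|D|^t$. Closing this gap is the principal technical point, and is where \eqref{eq:est_on_limitmeasure} is used for \emph{all} $\varepsilon>0$ simultaneously rather than at a single one. I would attempt to do so by a dyadic localisation of the preceding energy estimate: partition $D$ into its sub-intervals and apply \eqref{eq:est_on_limitmeasure} at the scale of each sub-interval matched to $r$, assembling the bounds into a telescoping estimate that replaces the global factor $|D|^{-\varepsilon}$ by a convergent geometric sum. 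Combined with the slack provided by the strict inequality $t<s$ and the variant of \eqref{eq:liminfM} of Section~\ref{sec:intersectionclasslemma}, this should absorb the residual loss and deliver $\limsup E_k\in\mathscr{G}^s([0,1])$. The positivity hypothesis $\mu(I)>0$ is used to keep $\mu(D)>0$ for every dyadic $D$, so that the Frostman construction does not degenerate.
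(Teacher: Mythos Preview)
Your argument is essentially the paper's, and the energy bound $\mathscr{M}_\infty^t(E_k\cap D)\geq c_0\,\mu_k(D)^2/I_t(\mu_k|_D)$ together with the convergence $I_t(\mu_k|_D)\to I_t(\mu|_D)$ and the H\"older--Young estimate $I_t(\mu|_D)\leq C_\varepsilon'|D|^{-t-\varepsilon}\mu(D)^2$ are exactly what the paper obtains. The only cosmetic difference is that the paper does not truncate but \emph{reweights}: it sets $\nu_n=(R_t\mu_n|_I)^{-1}\,\mu_n|_I$ (normalised) and proves directly, via an elementary lemma on step functions (Lemma~\ref{lem:bounded}), that $|U|^{-t}\int_U (R_t\mu_n|_I)^{-1}\,\mathrm{d}\mu_n\leq 1$ for every $U\subset I$; your ``standard Frostman truncation plus centre-shifting'' achieves the same Frostman bound by a different, equally valid, route. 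The denominator is then handled by Jensen's inequality, giving the identical ratio $\mu_n(I)^2/I_t(\mu_n|_I)$.

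Where you go astray is in the ``main obstacle''. You have guessed the content of the variant lemma in Section~\ref{sec:intersectionclasslemma} incorrectly: Lemma~\ref{lem:intersection} does \emph{not} demand $\liminf_k\mathscr{M}_\infty^t(E_k\cap D)\geq c|D|^t$, but only the weaker
\[
  \liminf_{k\to\infty}\mathscr{M}_\infty^t(E_k\cap D)\ \geq\ c_{t,\varepsilon}\,|D|^{t+\varepsilon}
  \qquad\text{for every }t<s,\ \varepsilon>0,
\]
and this already suffices for $\limsup E_k\in\mathscr{G}^s([0,1])$. The $\varepsilon$-loss you worry about is therefore not an obstacle at all; the bound $c_\varepsilon|D|^{t+\varepsilon}$ you have already derived is exactly what is needed, and the proposed ``dyadic localisation'' to upgrade it is unnecessary. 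The paper's Lemma~\ref{lem:intersection} is a two-parameter refinement of Falconer's Lemma~2 in \cite{falconernew}, proved by the same nested-cube argument with an extra parameter $u$ between $t$ and $s$ that absorbs the $\varepsilon$. This is precisely the ``slack provided by the strict inequality $t<s$'' you allude to at the end---you had the right instinct but did not push it through.
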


We will prove Theorem~\ref{the:frostman} in
Section~\ref{sec:proof_frostman}. Next, we will present our
application of this theorem.

\subsection{Diophantine approximation with $\boldsymbol{\lambda}$-expansions}

Let $\lambda \in (\frac{1}{2}, 1)$, and $\alpha > 1$. We consider the
sets
\[
E_\lambda (\alpha) = \{\, x \in [0,1] : |x - s_n| < 2^{-\alpha n}
\text{ for some } s_n \text{ infinitely often} \, \}
\]
where $s_n \in \{\, (1 - \lambda) \sum_{k=0}^n a_k \lambda^k$ and $a_k
\in \{0, 1\} \, \}$. This set can be written as a limsup-set,
$E_\lambda (\alpha) = \limsup_{n \to \infty} E_{\lambda,n} (\alpha)$,
where
\begin{align*}
  E_{\lambda, n} (\alpha) &= \{\, x \in [0,1] : |x - y| < 2^{-\alpha
    n}
  \text{ for some } y \in F_{\lambda, n} \, \}, \\
  F_{\lambda, n} &= \{\, y : y = (1-\lambda) \sum_{k=0}^n a_k
  \lambda^k, \ a_k \in \{0,1\} \, \}.
\end{align*}

The membership in the classes $\mathscr{G}^s ([0,1])$ of the set
$E_{\lambda} (\alpha)$ for typical $\lambda$, was studied in our paper
\cite{persson-reeve}, where it was proved that $E_\lambda (\alpha)$
belongs to $\mathscr{G}^{1/\alpha} ([0,1])$ for almost all $\lambda
\in (\frac{1}{2}, \frac{2}{3})$. Here, we can state the following
improvement of this result.

\begin{theorem} \label{the:bernoulli}
  For almost all $\lambda \in (\frac{1}{2}, 1)$, the set $E_\lambda
  (\alpha)$ is in $\mathscr{G}^{1/\alpha} ([0,1])$.
\end{theorem}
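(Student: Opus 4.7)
The plan is to apply Theorem~\ref{the:frostman} to the representation $E_\lambda(\alpha) = \limsup_n E_{\lambda,n}(\alpha)$, with $\mu_n$ taken to be a smoothing of the finite Bernoulli convolution at scale $2^{-\alpha n}$. Let $\mathcal{B}_{\lambda,n}$ denote the distribution of $(1-\lambda)\sum_{k=0}^{n} a_k \lambda^{k}$ for $a_0,\ldots,a_n$ i.i.d.\ symmetric Bernoulli on $\{0,1\}$, and let $\phi_n$ be a smooth bump of unit mass supported in $[-2^{-\alpha n}, 2^{-\alpha n}]$. Set $\mu_n = \mathcal{B}_{\lambda,n} * \phi_n$; each $\mu_n$ is a Borel probability measure whose support is contained in the closure of $E_{\lambda,n}(\alpha)$, and $\mu_n$ converges weakly to the infinite Bernoulli convolution $\nu_\lambda$ because $\mathcal{B}_{\lambda,n} \to \nu_\lambda$ weakly and $\phi_n \to \delta_0$. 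By Solomyak's theorem, $\nu_\lambda$ has $L^{2}$ density $h$ on $[0,1]$ for almost every $\lambda \in (\tfrac12, 1)$, and I restrict the argument to such $\lambda$.

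Among the hypotheses of Theorem~\ref{the:frostman}, positivity of $\nu_\lambda$ on every open subinterval of $[0,1]$ is immediate from $\operatorname{supp}(\nu_\lambda) = [0,1]$. For the local $L^{2}$--$L^{1}$ comparison~\eqref{eq:est_on_limitmeasure}, the plan is to iterate the self-similarity relation
\[
  h(x) = \tfrac{1}{2\lambda}\bigl(h(x/\lambda) + h((x-(1-\lambda))/\lambda)\bigr)
\]
of the order of $\log_{1/\lambda}(1/|I|)$ times, so as to express $h\chi_I$ on a short interval $I$ as a convex combination of rescaled and translated copies of $h$ on an interval of macroscopic length; combining the resulting identity with the global $L^{2}$ bound on $h$ and the strict positivity of $\nu_\lambda(I)$ then delivers the required inequality.

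The main obstacle is the uniform $s$-energy bound~\eqref{eq:bounded_energy} for each $s < 1/\alpha$. The strategy is to establish a uniform collision estimate of the form $\mathbf{P}(|X-Y|\leq r) \leq C_\varepsilon\, r^{1-\varepsilon}$, valid for every $\varepsilon > 0$ with $C_\varepsilon$ independent of $n$, where $X, Y$ are independent draws from $\mu_n$. Once such a bound is in place, a dyadic decomposition of $|x-y|^{-s}$ over the annuli $\{2^{-k-1} < |x-y| \leq 2^{-k}\}$ immediately yields
\[
  \iint |x-y|^{-s}\,\mathrm{d}\mu_n(x)\,\mathrm{d}\mu_n(y) \leq 2^{s} C_\varepsilon \sum_{k \geq 0} 2^{-k(1 - s - \varepsilon)} < \infty
\]
whenever $\varepsilon < 1 - s$, which is possible for every $s < 1/\alpha < 1$. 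To establish the collision estimate itself, I plan to use the convolution identity $\nu_\lambda = \mathcal{B}_{\lambda,n} * \nu_{\lambda,>n}$, in which $\nu_{\lambda,>n}$ denotes the tail Bernoulli convolution for indices $k > n$. This identity, combined with Young's inequality $\|h * \check h\|_\infty \leq \|h\|_2^2$ and Solomyak's $L^{2}$ bound on $h$, controls the collision probabilities of $\mathcal{B}_{\lambda,n}$ at scales exceeding $\lambda^{n+1}$; at the microscopic scale the smoothing by $\phi_n$ provides the required bound; the intermediate range of scales is the delicate step and is expected to require an additional Fourier-analytic or deconvolution argument exploiting the $L^{2}$ decay of $\widehat{\nu}_\lambda$. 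Once the collision estimate is secured, Theorem~\ref{the:frostman} gives $E_\lambda(\alpha) \in \mathscr{G}^{s}([0,1])$ for every $s < 1/\alpha$, and since this is equivalent to $E_\lambda(\alpha) \in \mathscr{G}^{1/\alpha}([0,1])$, the proof is complete.
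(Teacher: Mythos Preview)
Your overall framework matches the paper's: apply Theorem~\ref{the:frostman} to smoothed finite Bernoulli convolutions, verify positivity and the local $L^2$--$L^1$ comparison for the limit $\nu_\lambda$ via iterated self-similarity (this is indeed how Proposition~\ref{pro:est_on_bernoulli} is proved), and then establish the uniform $s$-energy bound. The first two steps are essentially right.

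The energy bound, however, has a genuine gap at exactly the place you flag as ``delicate.'' The deconvolution idea only controls collisions down to scale $\lambda^{n}$: writing $\nu_\lambda = \mathcal B_{\lambda,n} * \nu_{\lambda,>n}$ with the tail supported in an interval of length $O(\lambda^{n+1})$, one obtains $\mathbf P_{\mathcal B_{\lambda,n}}(|X-Y|\le r)\le C(r+\lambda^{n+1})$ and nothing finer. Equivalently, in Fourier variables the factorisation $|\widehat\nu_\lambda(\xi)|=|\widehat{\mathcal B}_{\lambda,n}(\xi)|\cdot|\widehat\nu_\lambda(\lambda^{n+1}\xi)|$ yields $|\widehat{\mathcal B}_{\lambda,n}(\xi)|\le 2|\widehat\nu_\lambda(\xi)|$ only when $|\lambda^{n+1}\xi|$ is small, i.e.\ for $|\xi|\lesssim\lambda^{-n}$. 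On the intermediate range $\lambda^{-n}\lesssim|\xi|\le 2^{\alpha n}$ the only available pointwise bound is $|\widehat{\mathcal B}_{\lambda,n}|\le 1$, and integrating $|\xi|^{s-1}$ over that range already contributes $\asymp 2^{\alpha s n}\to\infty$. No $L^2$ information on $\widehat\nu_\lambda$ repairs this: beyond frequency $\lambda^{-n}$ the tail factor $\widehat\nu_\lambda(\lambda^{n+1}\xi)$ is itself oscillatory and may vanish, so the finite product $\widehat{\mathcal B}_{\lambda,n}$ is decoupled from $\widehat\nu_\lambda$ there. Knowing $h_\lambda\in L^2$ for a \emph{fixed} $\lambda$ does not, by any known mechanism, control the fine-scale collision structure of $\mathcal B_{\lambda,n}$ in the window $[2^{-\alpha n},\lambda^n]$.

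The paper does not attempt a pointwise-in-$\lambda$ argument. It writes the $s$-energy as $c_s\int|\widehat h_{\alpha,\lambda,k}(\xi)|^2|\xi|^{s-1}\,\mathrm d\xi$, expands the cosine product, and then \emph{integrates over $\lambda$} on a subinterval $[p,q]\subset(\tfrac12,0.64)$, invoking Solomyak's transversality (Lemma~\ref{lem:transint}) to bound the resulting oscillatory $\lambda$-integrals uniformly. This produces $\int_p^q\iint|x-y|^{-s}\,\mathrm d\mu_{\lambda,k}\,\mathrm d\mu_{\lambda,k}\,\mathrm d\lambda\le K$ independent of $k$, and Fatou gives the bound for infinitely many $k$ and a.e.\ $\lambda\in(\tfrac12,0.64)$. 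A further convolution trick---replacing $\lambda$ by $\lambda^{m}$ and proving the analogous estimate for higher powers of $g_{\alpha,\lambda,k}$ (Section~\ref{sec:convolution})---pushes the conclusion to a.e.\ $\lambda\in(\tfrac12,1)$. Averaging in $\lambda$ via transversality is the essential mechanism your plan is missing.
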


\begin{remark}
  We note that we cannot have $E_\lambda (\alpha) \in \mathscr{G}^s
  ([0,1])$ for any $s > 1/\alpha$, since a simple covering argument
  shows that the Hausdorff dimension of $E_\lambda (\alpha)$ is not
  larger than $1/\alpha$.

  We should also remark that in our paper \cite{persson-reeve}, we
  studied a different scaling of the sets $E_\lambda (\alpha)$, so
  that they had diameter $\lambda/(1-\lambda)$. This is unimportant
  for the result. In this paper it will prove more convenient to work
  with the sets $E_\lambda (\alpha)$ if they are all subsets of
  $[0,1]$, hence the difference.
\end{remark}

The proof of Theorem~\ref{the:bernoulli} is in
Section~\ref{sec:proof_bernoulli}. It is an application of
Theorem~\ref{the:frostman}.

\section{Proof of Theorem~\ref{the:frostman}} \label{sec:proof_frostman}

\subsection{A Lemma on Large Intersection Classes} \label{sec:intersectionclasslemma}

We start with the following lemma, that will be used later in the
proof. It is the previously mentioned variation of \eqref{eq:liminfM}.

\begin{lemma} \label{lem:intersection}
  Let $E_n$ be open sets and $E = \limsup E_n$.  If for any
  $\varepsilon > 0$ and $t < s$ there is a constant
  $c_{t,\varepsilon}$ such that
  \[
  \liminf_{n \to \infty} \mathscr{M}_\infty^t (E_n \cap D) \geq
  c_{t,\varepsilon} |D|^{t+\varepsilon}
  \]
  holds for all dyadic cubes $D \subset [0,1]$, then $E$ is in the
  class $\mathscr{G}^s ([0,1])$.
\end{lemma}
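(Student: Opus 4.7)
The plan is to mirror the proof of the known criterion \eqref{eq:liminfM} from Falconer and Bugeaud, adjusting it to accommodate the weaker $|D|^{t+\varepsilon}$ lower bound in the hypothesis. Fix $\tau<s$ and a dyadic cube $D_0\subset[0,1]$; we must produce $c=c(\tau)>0$ with $\mathscr{M}_\infty^\tau(E\cap D_0)\geq c|D_0|^\tau$. Choose $t\in(\tau,s)$ and $\varepsilon>0$ with $t+\varepsilon<s$; the hypothesis then yields $\liminf_n\mathscr{M}_\infty^t(E_n\cap D)\geq c_{t,\varepsilon}|D|^{t+\varepsilon}$ for every dyadic $D\subset[0,1]$.

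Given a cover $\{D_i\}$ of $E\cap D_0$ by dyadic sub-cubes of $D_0$, we bound $\sum_i|D_i|^\tau$ from below. Slightly enlarge each $D_i$ to an open cube $D_i^*$ with $|D_i^*|\leq(1+\delta)|D_i|$, so that $U:=\bigcup_i D_i^*$ is open, contains $E\cap D_0$, and satisfies $\sum_i|D_i^*|^t\leq(1+\delta)^t\sum_i|D_i|^t$. The compact set $K:=D_0\setminus U$ is disjoint from $E$, so every point of $K$ lies in only finitely many $E_n$. Applying the Baire category theorem to $K$, written as a countable union of the closed sets $K\cap\bigcap_{n\geq N}E_n^c$, provides an integer $N_0$ and an open set $V\subset\R$ with $V\cap K$ non-empty, such that $V\cap K\cap E_n=\emptyset$ for all $n\geq N_0$; equivalently, $E_n\cap V\cap D_0\subset U$ for such $n$. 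Shrinking $V$ if necessary, one selects a dyadic cube $D'\subset V\cap D_0$ and deduces $E_n\cap D'\subset U$ for $n\geq N_0$; invoking the hypothesis at $D'$ then gives $\sum_i|D_i|^t\geq c_1|D'|^{t+\varepsilon}$ for some $c_1>0$.

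Using $|D_i|\leq|D_0|\leq 1$ and $\tau<t$, the elementary inequality $|D_i|^\tau\geq|D_0|^{\tau-t}|D_i|^t$ converts this into $\sum_i|D_i|^\tau\geq c_1|D_0|^{\tau-t}|D'|^{t+\varepsilon}$. To close the argument one needs $|D'|$ comparable to $|D_0|$, which the naive Baire step does not guarantee; the principal obstacle is therefore controlling the scale of $D'$. I would overcome this by iterating the Baire/covering step through the dyadic tree rooted at $D_0$ and exploiting that the hypothesis is uniform across all scales of $D$, so that the $|D|^\varepsilon$ loss can be absorbed by choosing the margin $t-\tau$ sufficiently large. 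Finally, $E=\bigcap_N\bigcup_{n\geq N}E_n$ is automatically a $G_\delta$ set, completing the verification that $E\in\mathscr{G}^s([0,1])$.
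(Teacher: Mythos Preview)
Your Baire-category step has a real gap that your sketch does not close. The dyadic cube $D'$ produced by Baire can be arbitrarily small relative to $D_0$, so the bound $\sum_i|D_i|^\tau\ge c_1|D_0|^{\tau-t}|D'|^{t+\varepsilon}$ is worthless in general, and ``iterating through the dyadic tree'' is not a mechanism: Baire hands you a single cube of unspecified scale, with no information about how many cubes at a given level admit a uniform $N_0$ or how they are distributed inside $D_0$. You have correctly identified that the margin between $\tau$ and $t$ should absorb the $\varepsilon$, but you have not supplied the device that makes this work.

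The paper avoids Baire altogether via a two-scale combinatorial decomposition. Fix $t<u<s$ and $\varepsilon>0$; for $D$ of side $2^{-m}$ choose a single cutoff level $n\ge m$ with $2^{-n(t-u)}\ge c_{u,\varepsilon}^{-1}|D|^{t-u-\varepsilon}$ (possible since $t-u<0$). Any disjoint dyadic cover $\{I_i\}$ of $E\cap D$ is then split: cubes $I_i$ of side $>2^{-n}$ are kept intact, while the remainder are grouped under dyadic cubes $J_j$ of side exactly $2^{-n}$. On each such $J_j$ one uses $|I_i|^t\ge 2^{-n(t-u)}|I_i|^u\ge c_{u,\varepsilon}^{-1}|D|^{t-u-\varepsilon}|I_i|^u$ together with $\mathscr{M}_\infty^u(E\cap J_j)\ge c_{u,\varepsilon}|J_j|^{u+\varepsilon}$; summing over $j$ yields $\sum_i|I_i|^t\ge|D|^{t-u-\varepsilon}\sum_j|J_j|^{u+\varepsilon}\ge|D|^t$. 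The deterministic choice of the cutoff $n$ is precisely the scale control your Baire argument lacks. Note also that the inequality $\mathscr{M}_\infty^u(E\cap J_j)\ge c_{u,\varepsilon}|J_j|^{u+\varepsilon}$ is stated for $E$ itself, not for $E_n$: the passage from the $\liminf$ hypothesis on the open sets $E_n$ to this bound on the limsup set relies on the increasing-sets property of the dyadic net content, which is the ingredient imported from Falconer's Lemma~2 and which your Baire step was in effect trying to replace.
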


The proof is a minor perturbation of the proof of Lemma~2 in
\cite{falconernew}.

\begin{proof}
  Let $0 < t < u < s$ and $\varepsilon > 0$.  We take a dyadic cube
  $D$ of length $2^{-m}$, and choose a number $n \geq m$ such that
  \[
  2^{-n (t-u)} \geq c_{u,\varepsilon}^{-1} 2^{-m (t - u -
    \varepsilon)}.
  \]

  Let $\{I_i\}$ be any disjoint cover of $E \cap D$ by dyadic
  cubes. We write $D$ as a finite union of disjoint dyadic cubes,
  \[
  D = \bigcup_{j=1}^k J_j,
  \]
  such that for any $j$ either
  \begin{itemize}
  \item[i)] $J_j = I_i$ for some $i$ and $|J_j| > 2^{-n}$,
  \end{itemize}
  or
  \begin{itemize}
  \item[ii)] $|J_j| = 2^{-n}$ and those $I_i$ that cover $E \cap J_j$
    are subsets of $J_j$.
  \end{itemize}
  
  Let $Q (j) = \{\, i : I_i \subseteq J_j \,\}$. If $j$ satisfies i),
  then $Q (j)$ has exactly one element, and so
  \begin{equation} \label{eq:intersectionclass}
    \sum_{i \in Q (j)} |I_i|^t = |J_j|^t = |J_j|^{t-u} |J_j|^u \geq
    |D|^{t-u} |J_j|^u \geq |D|^{t-u-\varepsilon}
    |J_j|^{u+\varepsilon}.
  \end{equation}

  If $j$ satisfies ii), then for $i \in Q (j)$ we have
  \[
  |I_i| = |I_i|^{t-u} |I_i|^u \geq 2^{-n (t-u)} |I_i|^u \geq
  c_{u,\varepsilon}^{-1} 2^{-m (t - u - \varepsilon)} |I_i|^u =
  c_{u,\varepsilon}^{-1} |D|^{t - u - \varepsilon} |I_i|^u.
  \]
  Hence, summing over $i \in Q(j)$, we get
  \begin{align}
    \nonumber \sum_{i \in Q (j)} |I_i|^t &\geq c_{u,\varepsilon}^{-1}
    |D|^{t - u - \varepsilon} \sum_{i \in Q (j)} |I_i|^u \\ &\geq
    c_{u,\varepsilon}^{-1} |D|^{t - u - \varepsilon}
    \mathscr{M}_\infty^u (E \cap J_j) \geq |D|^{t - u - \varepsilon}
    |J_j|^{u + \varepsilon}. \label{eq:intersectionclass2}
  \end{align}

  Combining \eqref{eq:intersectionclass} and
  \eqref{eq:intersectionclass2} we get
  \begin{align*}
    \sum_{i=1}^\infty |I_i|^t &\geq |D|^{t - u - \varepsilon}
    \sum_{j=1}^k |J_j|^{u + \varepsilon} \\ &\geq |D|^{t - u -
      \varepsilon} \mathscr{M}_\infty^{u + \varepsilon} (D) = |D|^{t -
      u - \varepsilon} |D|^{u + \varepsilon} = |D|^u.
  \end{align*}
  This shows that $E \in \mathscr{G}^u ([0,1])$. Since $u$ was
  arbitrary, the conclusion of the lemma follows.
\end{proof}

\subsection{Some Notation}

We will work with functions and probability measures on the interval
$[0,1]$. For a function $f \colon [0,1] \to \R$ and $0 < s < 1$, we
denote by $R_s f$ the function
\[
R_s f (x) = \int |x - y|^{-s} f (y) \, \mathrm{d}y,
\]
provided that the integral exists.  Similarly, if $\mu$ is a measure
on $[0,1]$, we let
\[
R_s \mu (x) = \int |x - y|^{-s} \, \mathrm{d} \mu (y),
\]
provided that the integral exists. This is the case, for instance, if
$f$ and the density of $\mu$ are in $L^p$ for some $p >
\frac{1}{1-s}$, since then Young's inequality implies, with $p >
\frac{1}{1-s}$ and $q < \frac{1}{s}$ such that $1/p + 1/q \leq 1$,
that
\[
\lVert R_s f \rVert_\infty \leq \lVert |\cdot|^{-t} \rVert_q \lVert f
\rVert_p < \infty.
\]

\subsection{Some lemmata}

In this section, we prove some basic estimates that will be used in
the proof of Theorem~\ref{the:frostman}.

\begin{lemma} \label{lem:Mm}
  Let $\mu$ be a Borel measure on $[0,1]$. Assume that for some $s>0$
  holds
  \[
  C = \iint |x-y|^{-s} \, \mathrm{d}\mu (x) \mathrm{d}\mu (y) <
  \infty.
  \]
  Then, if $M_m = \{\, (x,y) : |x-y|^{-s} > m \,\}$, we have for $0 <
  t <s$
  \[
  \iint_{M_m} |x-y|^{-t} \, \mathrm{d} \mu (x) \mathrm{d} \mu (y) \leq
  C \frac{s}{s-t} m^{t/s-1}.
  \]
\end{lemma}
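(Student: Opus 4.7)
The plan is to rewrite the integrand on $M_m$ using $|x-y|^{-t} = f^{t/s}$ where $f(x,y) = |x-y|^{-s}$ and $t/s \in (0,1)$, and then to exploit the hypothesis via Markov's inequality applied to the product measure $\nu = \mu \otimes \mu$: since $\iint f\, \mathrm{d}\nu = C$, we have $\nu(\{f > u\}) \leq C/u$ for every $u > 0$. The point is that we are integrating a smaller power of $f$ over a super-level set of $f$, which is exactly the setting where a layer-cake expansion gives a sharp bound.

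Concretely, I would use the layer-cake identity
\[
f^{t/s} = \frac{t}{s}\int_0^\infty u^{t/s-1}\mathbf{1}_{\{f>u\}}\,\mathrm{d}u,
\]
and Fubini to rewrite
\[
\iint_{M_m} |x-y|^{-t}\,\mathrm{d}\nu = \frac{t}{s}\int_0^\infty u^{t/s-1}\,\nu\bigl(M_m \cap \{f>u\}\bigr)\,\mathrm{d}u.
\]
I would then split the outer integral at $u = m$. For $0 < u \leq m$ the intersection equals $M_m$ itself, and integrating $u^{t/s-1}$ from $0$ to $m$ yields a contribution proportional to $m^{t/s}\,\nu(M_m)$. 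For $u > m$ the intersection is $\{f > u\}$, whose $\nu$-measure is at most $C/u$ by Markov's inequality. The remaining improper integral $\int_m^\infty u^{t/s-2}\,\mathrm{d}u$ converges precisely because $t/s - 2 < -1$, i.e.\ because $t < s$, evaluating to $m^{t/s-1}/(1-t/s)$.

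Assembling the two contributions and using also $\nu(M_m) \leq C/m$ produces an upper bound of the form
\[
C\,m^{t/s-1}\Bigl(1 + \tfrac{t/s}{1-t/s}\Bigr),
\]
and the coefficient simplifies to $s/(s-t)$, which is exactly the claimed constant. I do not anticipate any serious obstacle: the assumption $t < s$ is used in precisely one place, to guarantee convergence of the improper integral at infinity, and the rest is routine Fubini and elementary integration.
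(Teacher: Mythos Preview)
Your proposal is correct and is essentially the paper's own argument: both proofs use the layer-cake formula together with the Markov bound $\nu(\{f>u\})\leq C/u$, split the resulting integral at the level corresponding to $m$, and combine the two pieces to obtain $C\frac{s}{s-t}m^{t/s-1}$. The only cosmetic difference is that the paper parametrises the layer-cake by levels of $|x-y|^{-t}$ (so the split occurs at $m^{t/s}$), whereas you parametrise by levels of $f=|x-y|^{-s}$; the two are related by the substitution $v=u^{t/s}$ and yield the identical computation.
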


\begin{proof}
  We note that $\iint |x-y|^{-t} \, \mathrm{d} \mu (x) \mathrm{d} \mu
  (y) < C$ for $t < s$ and that $M_m = \{\, (x,y) : |x-y|^{-t} >
  m^{t/s} \, \}$. Using that $\mu \times \mu (M_m) \leq C/m$ we get
  \begin{align*}
    \iint_{M_m} |x-y|^{-t} \, \mathrm{d} \mu (x) \mathrm{d} \mu (y) &=
    \int_{m^{t/s}}^\infty \mu \times \mu (M_{u^{s/t}}) \, \mathrm{d} u
    + m^{t/s} \mu \times \mu (M_m) \\ &\leq \int_{m^{t/s}}^\infty
    \frac{C}{u^{s/t}} \, \mathrm{d} u + C m^{t/s - 1} \\ &= C
    \frac{s}{s-t} m^{t/s - 1}. \qedhere
  \end{align*}
\end{proof}

\begin{corollary} \label{cor:converge}
  If $\mu_n$ are Borel measures on $[0,1]$ that converge weakly to a
  measure $\mu$, and $\iint |x-y|^{-s} \, \mathrm{d}\mu_n (x)
  \mathrm{d} \mu_n (y)$ are uniformly bounded for some $s > 0$, then
  for $t < s$
  \[
  \iint |x-y|^{-t} \, \mathrm{d}\mu_n (x) \mathrm{d} \mu_n (y) \to
  \iint |x-y|^{-t} \, \mathrm{d}\mu (x) \mathrm{d} \mu (y)
  \]
  as $n \to \infty$.
\end{corollary}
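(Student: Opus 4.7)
The plan is to handle the singularity of $|x-y|^{-t}$ on the diagonal by a truncation argument, splitting the kernel into a bounded continuous part, to which weak convergence applies directly, and a small tail, which Lemma~\ref{lem:Mm} controls uniformly in $n$. Specifically, for $m > 0$, let
\[
\phi_m(x,y) = \min\bigl(|x-y|^{-t},\, m^{t/s}\bigr),
\]
which is bounded and continuous on $[0,1]^2$ (it equals $m^{t/s}$ on a neighbourhood of the diagonal). Then $|x-y|^{-t} = \phi_m(x,y) + (|x-y|^{-t} - m^{t/s}) \chi_{M_m}(x,y)$, so after integration against any probability measure $\nu \times \nu$ on $[0,1]^2$ the error $\bigl|\iint|x-y|^{-t} \mathrm{d}\nu\times\nu - \iint \phi_m \mathrm{d}\nu\times\nu\bigr|$ is bounded by $\iint_{M_m} |x-y|^{-t} \mathrm{d}\nu \times \nu$.

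Carrying this out: first, I would apply Lemma~\ref{lem:Mm} directly to each $\mu_n$ to obtain the tail estimate $\iint_{M_m}|x-y|^{-t}\mathrm{d}\mu_n\times\mu_n \leq C\frac{s}{s-t}m^{t/s-1}$, which is uniform in $n$ and tends to $0$ as $m\to\infty$ since $t<s$. Second, I would verify the analogous bound for the limit measure $\mu$. For this I need $\iint|x-y|^{-s}\mathrm{d}\mu\times\mu \leq C$. I would get this by noting that $\min(|x-y|^{-s}, M)$ is bounded and continuous for each $M$, applying weak convergence of $\mu_n \times \mu_n$ to $\mu \times \mu$ (which is standard and follows from weak convergence of $\mu_n$ to $\mu$), and then letting $M \to \infty$ via monotone convergence; the uniform bound by $C$ is preserved in the limit. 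Lemma~\ref{lem:Mm} then applies to $\mu$ as well.

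Third, for each fixed $m$, weak convergence of $\mu_n \times \mu_n$ to $\mu \times \mu$ together with the continuity and boundedness of $\phi_m$ gives
\[
\iint \phi_m \, \mathrm{d}\mu_n\times\mu_n \longrightarrow \iint \phi_m \, \mathrm{d}\mu \times \mu
\]
as $n \to \infty$. Finally, given $\varepsilon > 0$, I would choose $m$ so large that both tail estimates are less than $\varepsilon/3$, and then take $n$ large enough that the $\phi_m$-integrals agree up to $\varepsilon/3$; combining these through the decomposition above yields the conclusion.

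The only slightly subtle point, and the main obstacle, is Step~2: transferring the $s$-energy bound from the $\mu_n$ to the limit measure $\mu$, since $|x-y|^{-s}$ is not bounded and weak convergence is not directly applicable. The truncate-and-pass-to-the-limit argument above, combined with monotone convergence, takes care of this and makes Lemma~\ref{lem:Mm} applicable to $\mu$, after which the rest is routine.
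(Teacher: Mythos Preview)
Your proof is correct and essentially mirrors the paper's argument: the same truncation $\phi_m(x,y) = \min\{|x-y|^{-t}, m^{t/s}\}$, the same appeal to Lemma~\ref{lem:Mm} for the tail, and weak convergence of the product measures for the bounded part. The paper organises the conclusion as a $\limsup \leq$ / $\liminf \geq$ split (the latter being immediate from $\phi_m \leq |x-y|^{-t}$ and monotone convergence), which allows it to skip your Step~2 of transferring the $s$-energy bound to $\mu$; this is a minor organisational difference, not a genuinely different route.
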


\begin{proof}
  Let $\varepsilon > 0$ and $t < s$. Let $M_m$ denote the same set as
  before. By Lemma~\ref{lem:Mm} we can take $m$ and $N$ so large that
  \[
  \iint_{M_m} |x - y|^{-t} \, \mathrm{d} \mu_n (x) \mathrm{d} \mu_n
  (y) < \varepsilon
  \]
  for all $n > N$. Then
  \[
  \iint |x - y|^{-t} \, \mathrm{d} \mu_n (x) \mathrm{d} \mu_n (y) \leq
  \varepsilon + \iint \min \{ |x - y|^{-t}, m^{t/s} \} \, \mathrm{d}
  \mu_n (x) \mathrm{d} \mu_n (y).
  \]
  The function $\phi_m \colon (x,y) \mapsto \min \{ |x - y|^{-t},
  m^{t/s} \}$ is continuous, and so
  \[
  \iint \phi_m \, \mathrm{d} \mu_n \mathrm{d} \mu_n \to \iint \phi_m
  \, \mathrm{d} \mu \mathrm{d} \mu \leq \iint |x - y|^{-t} \,
  \mathrm{d} \mu (x) \mathrm{d} \mu (y),
  \]
  as $n \to \infty$. Since $\varepsilon$ is arbitrary this shows that
  \[
  \limsup_{n\to\infty} \iint |x - y|^{-t} \, \mathrm{d} \mu_n (x)
  \mathrm{d} \mu_n (y) \leq \iint |x - y|^{-t} \, \mathrm{d} \mu (x)
  \mathrm{d} \mu (y).
  \]
  Now, the (obvious) inequality
  \[
  \liminf_{n\to\infty} \iint |x - y|^{-t} \, \mathrm{d} \mu_n (x)
  \mathrm{d} \mu_n (y) \geq \iint |x - y|^{-t} \, \mathrm{d} \mu (x)
  \mathrm{d} \mu (y)
  \]
  proves the corollary.
\end{proof}

\begin{lemma} \label{lem:bounded}
  Suppose $h \colon [0,1] \to \R$ is a non-negative function such
  that $R_s h$ is bounded. If $U \subset I \subset [0,1]$ are two
  intervals, then
  \[
  \frac{1}{|U|^s} \int_U \frac{h}{R_s (h|_I)} \, \mathrm{d}x \leq 1
  \]
  provided $h$ does not vanish a.e.\ on $I$.
\end{lemma}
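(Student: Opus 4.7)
The plan is to prove this by a direct, essentially one-line estimate, after isolating a trivial degenerate case. The key observation is that $R_s(h|_I)$ can be bounded below on $U$ by restricting the defining integral to $U$ itself and exploiting the trivial bound $|x - y| \leq |U|$ for $x, y \in U$.

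First I would dispose of the degenerate case where $h$ vanishes a.e.\ on $U$. In that situation, $\int_U h/R_s(h|_I) \, \mathrm{d}x = 0$ (which is legitimate because $R_s(h|_I)$ is strictly positive, at least a.e., by the assumption that $h$ does not vanish a.e.\ on $I$), so the inequality holds trivially.

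In the main case, where $h$ is not a.e.\ zero on $U$, I would write, for every $x \in U$,
\[
R_s(h|_I)(x) = \int_I |x - y|^{-s} h(y) \, \mathrm{d}y \geq \int_U |x - y|^{-s} h(y) \, \mathrm{d}y \geq |U|^{-s} \int_U h(y) \, \mathrm{d}y,
\]
using that $U \subset I$, $h \geq 0$, and $|x - y| \leq |U|$ for $x, y \in U$. Since the right-hand side is a positive constant independent of $x$, dividing and integrating $h$ over $U$ gives
\[
\int_U \frac{h(x)}{R_s(h|_I)(x)} \, \mathrm{d}x \leq \frac{|U|^s \int_U h(x) \, \mathrm{d}x}{\int_U h(y) \, \mathrm{d}y} = |U|^s,
\]
which is exactly the claim. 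There is no real obstacle here; the hypothesis that $R_s h$ is bounded is only needed to guarantee that $R_s(h|_I)$ is finite so that the integrand is well-defined, and the hypothesis that $h$ does not vanish a.e.\ on $I$ is only needed to keep the integrand from being of the form $0/0$.
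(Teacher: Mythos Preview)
Your proof is correct, and it is genuinely simpler than the argument in the paper.

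The paper proceeds by first reducing to step functions $h|_I = \sum_k c_k \chi_{I_k}$ with $|I_k|$ all equal, then for each $I_l \subset U$ bounds $R_s(h|_I)$ from below using the pointwise estimates $R_s\chi_{I_l}(x) \geq |I_l|^{1-s}$ on $I_l$ and $R_s\chi_{I_k}(x) \geq |U|^{-s}|I_k|$ for the other $k$, sums the resulting fractions, and finally passes to the limit through the step-function approximation. Your observation that for $x \in U$ one has
\[
R_s(h|_I)(x) \;\geq\; \int_U |x-y|^{-s} h(y)\,\mathrm{d}y \;\geq\; |U|^{-s}\int_U h
\]
bypasses all of this: it yields a uniform lower bound on the denominator in one step and makes the integral collapse immediately. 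The paper's decomposition gains nothing extra here---no sharper constant, no additional generality---so your route is strictly more economical. The only minor remark is that the hypothesis ``$R_s h$ bounded'' is, as you note, not really used in either argument beyond ensuring the integrand is meaningful; positivity of $R_s(h|_I)$ on $U$ already follows from $h$ not vanishing a.e.\ on $I$ via the same distance estimate applied with $I$ in place of $U$.
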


\begin{proof}
  Let $V$ be an interval. We first note that if $x \in V$, then
  \begin{equation} \label{eq:RsI_est}
    \frac{1}{1-s} |V|^{1-s} \leq \int_V |x - y|^{-s} \, \mathrm{d}y
    \leq \frac{2^s}{1-s} |V|^{1-s}.
  \end{equation}
  Hence we have
  \begin{equation} \label{eq:RsI_est2}
    |V|^{1-s} \chi_V (x) \leq \frac{|V|^{1-s}}{1-s} \chi_V (x) \leq
    \int |x - y|^{-s} \chi_V (y) \, \mathrm{d}y = R_s \chi_V (x)
  \end{equation}
  for any $x$. For $x \not \in V$ we have
  \begin{equation} \label{eq:RsI_est3}
  R_s \chi_V (x) \geq d^{-s} |V|,
  \end{equation}
  where $d = \sup \{\, |x-y| : y \in V \, \}$.
  
  Assume that $h|_I$ is of the form
  \begin{equation} \label{eq:hstep}
    h|_I = \sum_{k \in J} c_k \chi_{I_k},
  \end{equation}
  where $(I_k)$ are disjoint intervals that are subsets of $I$. Let
  $J_U \subset J$ be the set of indices such that $I_k$ is a subset of
  $U$ for $k \in J_U$. Then
  \[
  R_s (h|_I) \geq \sum_{k \in J_U} c_k R_s \chi_{I_k}.
  \]
  By \eqref{eq:RsI_est2} and \eqref{eq:RsI_est3}, we have for $x \in
  I_l$ that
  \[
  R_s (h|_I) (x) \geq c_l |I_l|^{1-s} + \sum_{J_U \ni k \neq l} c_k
  |U|^{-s} |I_k|,
  \]
  and
  \[
  \frac{h (x)}{R_s (h|_I) (x)} \leq \frac{c_l}{\displaystyle c_l
    |I_l|^{1-s} + \sum_{J_U \ni k \neq l} c_k |U|^{-s} |I_k|}.
  \]
  Hence
  \[
  \frac{1}{|U|^s} \int_U \frac{h(x)}{R_s (h|_I) (x)} \, \mathrm{d} x
  \leq |U|^{-s} \sum_{l \in J_U} \frac{c_l |I_l|}{\displaystyle c_l
    |I_l|^{1-s} + \sum_{J_U \ni k \neq l} c_k |U|^{-s} |I_k|}.
  \]
  If we assume that $|I_k| = d$ for all $k$, then
  \begin{align*}
    \frac{1}{|U|^s} \int_U \frac{h(x)}{R_s (h|_I) (x)} \, \mathrm{d} x
    & \leq |U|^{-s} \sum_{l \in J_U} \frac{c_l d}{\displaystyle c_l
      d^{1-s} + \sum_{J_U \ni k \neq l} c_k |U|^{-s} d} \\ & = \sum_{l
      \in J_U} \frac{c_l |U|^{-s} d^s}{\displaystyle c_l + \sum_{J_U
        \ni k \neq l} c_k |U|^{-s} d^s} \\ & \leq \sum_{l \in J_U}
    \frac{c_l |U|^{-s} d^s}{\displaystyle c_l |U|^{-s} d^s + \sum_{J_U
        \ni k \neq l} c_k |U|^{-s} d^s} \leq 1.
  \end{align*}

  The general case is now proved by approximating with $h$ of the form
  \eqref{eq:hstep}, with $|I_k| = |I_l|$.
\end{proof}

\subsection{Final part of the proof of Theorem~\ref{the:frostman}}

We will prove that for any $t < s$ and $\varepsilon > 0$, there is a
constant $c$ such that
\begin{equation} \label{eq:Minfty}
  \liminf \mathscr{M}_\infty^t (E_n \cap I) \geq c |I|^{t +
    \varepsilon}
\end{equation}
holds for any interval $I \subset [0,1]$. This implies according to
Lemma~\ref{lem:intersection}, that $E$ is in $\mathscr{G}^s ([0,1])$.

Let $I \subset [0,1]$ be fixed and fix $t < s$. We denote by
$\mu_n|_I$ the restriction of $\mu_n$ to $I$, i.e.\ $\mu_n|_I (A) =
\mu_n (I \cap A)$. For large enough $n$ we may assume that $\mu_n (I)
> 0$.

We define new measures $\nu_n$ by
\[
\nu_n (A) = \frac{\int_A (R_t \mu_n|_I)^{-1} \, \mathrm{d}
  \mu_n}{\int_I (R_t \mu_n|_I)^{-1} \, \mathrm{d} \mu_n}.
\]
Clearly, the support of $\nu_n$ is equal to the support of $\mu_n|_I$.

We will prove that if $n$ is large enough, then $\nu_n$ satisfies the
estimates
\begin{equation} \label{eq:nu_est}
  \nu_n (U) \leq c \frac{|U|^t}{|I|^{t+\varepsilon}}
\end{equation}
for each interval $U \subset I$, where $c$ does not depend on $I$.

Suppose we have \eqref{eq:nu_est}. Let $\{U_k\}$ be a cover of $I \cap
E_n$ by disjoint intervals. Then, since the support of $\nu_n$ lies
inside the closure of $I \cap E_n$, we have
\[
1 = \nu_n (\cup U_k) = \sum \nu_n (U_k) \leq \sum c
\frac{|U_k|^t}{|I|^{t+\varepsilon}},
\]
which implies that
\[
\sum |U_k|^t \geq c^{-1} |I|^{t+\varepsilon}
\]
holds for any cover $\{U_k\}$. This implies \eqref{eq:Minfty}.

It remains to prove that \eqref{eq:nu_est} holds for large enough
$n$. First, we see that \eqref{eq:nu_est} is equivalent to the
inequality
\begin{equation} \label{eq:int_est}
  \frac{1}{|U|^t} \int_U (R_t \mu_n|_I)^{-1} \, \mathrm{d} \mu_n \leq
  \frac{c}{|I|^{t+\varepsilon}} \int_I (R_t \mu_n|_I)^{-1} \,
  \mathrm{d} \mu_n.
\end{equation}
Consider the left side of \eqref{eq:int_est}. By
Lemma~\ref{lem:bounded} this is bounded from above by the constant
$1$, which is a constant that is independent of $U$, $I$ and $n$.

The right side of \eqref{eq:int_est} is estimated as
\[
\int_I (R_t \mu_n|_I)^{-1} \, \frac{\mathrm{d} \mu_n}{\mu_n (I)} \geq
\biggl( \int_I R_t \mu_n|_I \, \frac{\mathrm{d} \mu_n}{\mu_n (I)}
\biggr)^{-1} = \frac{ \mu_n (I)}{ \int_I R_t \mu_n|_I \, \mathrm{d}
  \mu_n }.
\]
Hence
\begin{equation} \label{eq:lower_est}
  \int_I (R_t \mu_n|_I)^{-1} \, \mathrm{d} \mu_n \geq \frac{ \mu_n
    (I)^2}{ \int_I R_t \mu_n|_I \, \mathrm{d} \mu_n }.
\end{equation}

By Corollary~\ref{cor:converge} we have for $t < s$ that
\[
\int_I R_t \mu_n|_I \, \mathrm{d} \mu_n \to \int_I R_t \mu|_I \,
\mathrm{d} \mu,
\]
as $n \to \infty$. Hence the right hand side of \eqref{eq:lower_est}
converges to
\[
\frac{ \mu (I)^2}{ \int_I R_t \mu |_I \, \mathrm{d} \mu }.
\]
We want to prove that there exists a constant $c$, such that
\[
  \frac{c}{|I|^{t+\varepsilon}} \int_I (R_t \mu_n|_I)^{-1} \,
  \mathrm{d} \mu_n \geq 1,
\]
holds for large enough $n$. To do so, it is sufficient to prove that
\begin{equation} \label{eq:ineq_for_c}
  \frac{c}{|I|^{t+\varepsilon}} \frac{ \mu (I)^2}{ \int_I R_t \mu |_I
    \, \mathrm{d} \mu } \geq 1 \geq \frac{1}{|U|} \int_U (R_t
  \mu_n|_I)^{-1}\, \mathrm{d}\mu_n,
\end{equation}
where the rightmost inequality has been obtained above. The leftmost
inequality is proved using \eqref{eq:est_on_limitmeasure} as
follows. Let $g (u) = |u|^{-t}$ for $0 < |u| \leq |I|$ and $g(u) = 0$
otherwise. Then
\[
\int_I R_t \mu|_I \, \mathrm{d} \mu = \int_I \int_I g(x-y) h(y) \,
\mathrm{d}y \, h(y) \, \mathrm{d}x = \int (g *(h \chi_I)) (h \chi_I)
\, \mathrm{d}x
\]
Using H\"older's inequality and then Young's inequality we get
\[
\int_I R_t \mu|_I \, \mathrm{d} \mu \leq \lVert g * (h \chi_I)
\rVert_2 \lVert h \chi_I \rVert_2 \leq \lVert g \rVert_1 \lVert h
\chi_I \rVert_2^2.
\]
Since $\lVert g \rVert_1 = \frac{2}{1-t} |I|^{1-t}$,
\eqref{eq:est_on_limitmeasure} implies that
\[
\int_I R_t \mu|_I \, \mathrm{d} \mu \leq \frac{2 C_\varepsilon}{1-t}
|I|^{-t-\varepsilon} \lVert h \chi_I \rVert_1^2 = \frac{2
  C_\varepsilon}{1-t} |I|^{-t-\varepsilon} \mu (I)^2.
\]
It is now apparent that \eqref{eq:ineq_for_c} holds if we choose $c >
2C_\varepsilon / (1 - t)$.

This establishes \eqref{eq:int_est}, and hence finishes the proof.

\section{Proof of Theorem~\ref{the:bernoulli}} \label{sec:proof_bernoulli}

Here we will prove Theorem~\ref{the:bernoulli}. The proof, that is
based on Theorem~\ref{the:frostman}, is divided into three parts,
found in Sections~\ref{sec:densities},
\ref{sec:fourier}--\ref{sec:fourierproof} and
\ref{sec:convolution}. We will construct measures that for almost all
$\lambda$ satisfies the assumptions of Theorem~\ref{the:frostman}. In
Section~\ref{sec:densities}, we prove that the assumption
\eqref{eq:est_on_limitmeasure} of Theorem~\ref{the:frostman} is
satisfied, and in Sections~\ref{sec:fourier} and
\ref{sec:fourierproof} we prove that the assumption
\eqref{eq:bounded_energy} of Theorem~\ref{the:frostman} is satisfied
for almost all $\lambda \in (\frac{1}{2}, 0.64)$. Finally, in
Section~\ref{sec:convolution}, we show how to conclude the desired
result for almost all $\lambda \in (\frac{1}{2}, 1)$.

\subsection{Some estimates on densities} \label{sec:densities}

Let us now begin the proof of Theorem~\ref{the:bernoulli}.  According
to Theorem~\ref{the:frostman}, to prove that $E_\lambda (\alpha)$ is
in $\mathscr{G}^s ([0,1])$, it is sufficient to construct probability
measures $\mu_{\lambda,k}$ with support in $E_{\lambda,k} (\alpha)$,
converging weakly to a measure $\mu_\lambda$ with density $h_\lambda$
in $L^2$, such that there exist constants $C_\varepsilon$ and $C$ with
the property that
\begin{equation*}
  |I|^{1 + \varepsilon} \lVert h_\lambda \chi_I \rVert_2^2 \leq
  C_\varepsilon \lVert h_\lambda \chi_I \rVert_1^2
\end{equation*}
holds for all intervals $I \subset [0,1]$, and
\begin{equation*}
  \iint |x-y|^{-s} \, \mathrm{d}\mu_{\lambda,k} (x) \mathrm{d}
  \mu_{\lambda,k} (y) \leq C
\end{equation*}
holds for infinitely many $k$. We will construct such measures for all
$\lambda \in (\frac{1}{2}, 1)$, and prove that constants
$C_\varepsilon$ and $C$ with the properties mentioned above, exist
for almost all $\lambda \in (\frac{1}{2}, 1)$.

The measures $\mu_{\lambda,k}$ are constructed in the following
way. We put
\[
\Sigma_k = \{\, (a_0, a_1, \ldots, a_k) : a_n \in \{0,1\} \,\},
\]
and define $\pi_k \colon \Sigma_k \to [0,1]$ by $\pi_k \colon (a_0,
a_1, \ldots, a_k) \mapsto (1 - \lambda) \sum_{n=0}^k a_n
\lambda^n$. Hence $F_{\lambda, k} = \pi_k (\Sigma_k)$. Let $\nu$
denote the Lebesgue measure and let $\nu_I$ denote the normalised
Lebesgue measure on an interval $I$. We denote by $B_r (x)$ the closed
interval of length $2r$ and centre at $x$. Put
\begin{equation} \label{eq:mudef}
  \mu_{\lambda,k} = 2^{-k} \sum_{a \in \Sigma_k} \nu_{B_{r_k}
    (\pi_k (a) )},
\end{equation}
where $r_k = 2^{-\alpha k}$. Then $\mu_{\lambda,k}$ is a probability
measure with support $E_{\lambda, k} (\alpha)$, and $\mu_{\lambda,k}$
converges weakly to a measure $\mu_\lambda$ as $k \to \infty$. The
measure $\mu_\lambda$ is the distribution of the random Bernoulli
convolution as described in \cite{solomyak}, where it is proved that
$\mu_{\lambda}$ has a density $h_\lambda$ in $L^2$ for almost all
$\lambda \in (\frac{1}{2}, 1)$. It gives positive measure to any
interval $I \subset [0, 1]$ with non-empty interior.

Let $\lambda$ be such that $h_\lambda$ has density in $L^2$. The
density $h_\lambda$ satisfies the functional equation
\begin{equation} \label{eq:functionalequation}
  h_\lambda = \frac{1}{2 \lambda} h_\lambda \circ S_1^{-1} +
  \frac{1}{2 \lambda} h_\lambda\circ S_2^{-1},
\end{equation}
where $S_1$ and $S_2$ are the two contractions
\begin{align*}
  S_1 \colon x &\mapsto \lambda x \\
  S_2 \colon x &\mapsto \lambda x + 1 - \lambda.
\end{align*}
This can also be written in the following form. If $I$ is an interval,
and $I_1$, $I_2$ are two intervals such that $S_1 (I_1) = I$ and $S_2
(I_2) = I$, then
\begin{align*}
  \int_I h_\lambda \, \mathrm{d} \nu &= \frac{1}{2 \lambda}
  \int_I h_\lambda \circ S_1^{-1} \, \mathrm{d} \nu + \frac{1}{2
    \lambda} \int_I h_\lambda \circ S_2^{-1} \, \mathrm{d} \nu \\ &=
  \frac{1}{2} \int_{I_1} h_\lambda \, \mathrm{d} \nu + \frac{1}{2}
  \int_{I_2} h_\lambda \, \mathrm{d} \nu,
\end{align*}
or equivalently
\begin{equation} \label{eq:functionalequation_measureform}
  \mu_\lambda (I) = \frac{1}{2} \mu_\lambda (S_1^{-1} (I)) +
  \frac{1}{2} \mu_\lambda (S_2^{-1} (I)).
\end{equation}

We prove the following property of the measure $\mu_\lambda$.

\begin{proposition} \label{pro:est_on_bernoulli}
  If $\lambda$ is such that $\mu_\lambda$ has density $h_\lambda$ in
  $L^2$, then for any $\varepsilon > 0$, there exists a constant
  $C_\varepsilon$ such that
  \begin{equation} \label{eq:est_on_bernoulli} |I|^{1 + \varepsilon}
    \lVert h_\lambda \chi_I \rVert_2^2 \leq C_\varepsilon \lVert
    h_\lambda \chi_I \rVert_1^2
  \end{equation}
  holds for any interval $I \subset [0,1]$.
\end{proposition}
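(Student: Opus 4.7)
The plan is to exploit the self-similarity of $\mu_\lambda$ via the iterated functional equation~\eqref{eq:functionalequation}:
\[
h_\lambda = \frac{1}{(2\lambda)^N}\sum_{a \in \{0,1\}^N} h_\lambda \circ L_a^{-1},
\]
where $L_a(y) = \lambda^N y + (1-\lambda)\sum_{k=0}^{N-1} a_k\lambda^k$ is a contraction of ratio $\lambda^N$ whose image $L_a([0,1])$ is an interval of length $\lambda^N$. Given $I \subset [0,1]$, I would choose $N$ so that $\lambda^N$ is comparable to $|I|^{1+\delta}$ for a small $\delta > 0$ tied to $\varepsilon$; with this choice each relevant rescaled interval $L_a^{-1}(I)$ has length $\sim |I|^{-\delta}$ and contains $[0,1]$ for many~$a$.

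Let $A_I = \{a \in \{0,1\}^N : L_a([0,1]) \cap I \neq \emptyset\}$ and $M_I = |A_I|$. Only the $a \in A_I$ contribute for $x \in I$, so Cauchy--Schwarz applied pointwise gives $h_\lambda(x)^2 \leq \frac{M_I}{(2\lambda)^{2N}}\sum_{a \in A_I} h_\lambda(L_a^{-1}(x))^2$. Integrating on $I$, changing variables, and using the crude bound $\lVert h_\lambda \chi_{L_a^{-1}(I)}\rVert_2^2 \leq \lVert h_\lambda\rVert_2^2$ (valid since $|L_a^{-1}(I) \cap [0,1]| \leq 1$) produces
\[
\lVert h_\lambda \chi_I\rVert_2^2 \leq \frac{M_I^2 \lVert h_\lambda\rVert_2^2}{4^N \lambda^N}.
\]
For the $L^1$ side, iterating~\eqref{eq:functionalequation_measureform} gives $\mu_\lambda(I) = 2^{-N}\sum_{a \in A_I}\mu_\lambda(L_a^{-1}(I))$, and for $a$ in the bulk $B_I = \{a \in A_I : L_a([0,1]) \subset I\}$ one has $L_a^{-1}(I) \supset [0,1]$ and hence $\mu_\lambda(L_a^{-1}(I)) = 1$, whence $\mu_\lambda(I) \geq |B_I|/2^N$.

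Combining these estimates and substituting $\lambda^N \sim |I|^{1+\delta}$, the target inequality~\eqref{eq:est_on_bernoulli} reduces to the count comparison $M_I \leq C|B_I|\cdot|I|^{(\delta - \varepsilon)/2}$, with $C$ depending only on $\lVert h_\lambda\rVert_2$; that is, the envelope $A_I$ must not be substantially larger than its bulk $B_I$.

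The main obstacle is this count comparison. The set $A_I \setminus B_I$ consists of $a$'s with $L_a(0)$ in two boundary strips of total length $2\lambda^N$. Since $\mu_\lambda$ is the distribution of $L_a(0) + \lambda^N Z$ with $a$ uniform on $\{0,1\}^N$ and $Z$ an independent copy of~$\mu_\lambda$, one obtains $2^{-N}|\{a : L_a(0) \in J\}| \leq \mu_\lambda(J+[0,\lambda^N])$ for every interval $J$. Combined with the Cauchy--Schwarz estimate $\mu_\lambda(J) \leq |J|^{1/2}\lVert h_\lambda\rVert_2$, this bounds $(M_I - |B_I|)/2^N$ by $C\lambda^{N/2}\lVert h_\lambda\rVert_2 \sim |I|^{(1+\delta)/2}$. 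Closing the argument requires $\mu_\lambda(I) \geq c|I|^{(1+\delta)/2}$, so that the boundary correction is absorbed into $|B_I|/2^N$, a regularity statement essentially equivalent to the conclusion. The resolution is to run the iteration as a bootstrap: begin from a weak preliminary form of~\eqref{eq:est_on_bernoulli} with a large $\delta$, then iteratively shrink $\delta$ toward~$\varepsilon$, using the improved estimate at each stage to justify the count comparison at the next.
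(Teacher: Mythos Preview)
Your reduction to the count comparison $M_I \lesssim |B_I|$ is correct, but the bootstrap you propose cannot close the argument. The obstruction is the boundary behaviour of $\mu_\lambda$: by Lemma~\ref{lem:estimates_on_measure} one has $\mu_\lambda([0,r)) \asymp r^\theta$ with $\theta = -\log 2/\log\lambda > 1$, and the same near $1$ by symmetry. Hence the regularity you say you need, $\mu_\lambda(I) \geq c|I|^{(1+\delta)/2}$, is \emph{false} for every small $\delta$ whenever $I$ abuts $0$ or $1$, since $(1+\delta)/2 < 1 < \theta$. Moreover, the conclusion~\eqref{eq:est_on_bernoulli} itself yields no lower bound on $\mu_\lambda(I)$: combining it with Cauchy--Schwarz, $\mu_\lambda(I)^2 \leq |I|\,\lVert h_\lambda\chi_I\rVert_2^2$, gives only the triviality $|I|^\varepsilon \leq C_\varepsilon$. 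So even after establishing~\eqref{eq:est_on_bernoulli} for some large $\varepsilon_0$, you cannot feed it back to sharpen the lower bound on $\mu_\lambda(I)$, and the iteration stalls at $\varepsilon \geq 2\theta - 1$.

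The paper meets the same boundary obstruction but handles it differently. First, intervals $[0,r)$ and $(1-r,1]$ are treated \emph{exactly}: near $0$ only one branch of the functional equation is active, so the ratio $g(r) = r\,\lVert h_\lambda\chi_{[0,r)}\rVert_2^2 / \lVert h_\lambda\chi_{[0,r)}\rVert_1^2$ satisfies $g(\lambda r) = g(r)$ and is therefore bounded outright, with no loss. Second, the induction advances one factor of $\lambda$ at a time (from length $L$ to $\lambda L$) rather than jumping to depth $N$. At each step the two preimages $S_1^{-1}(J)$, $S_2^{-1}(J)$ are either both inside $[0,1]$, where the inductive hypothesis applies, or one overhangs and its intersection with $[0,1]$ is a boundary interval governed by the exact bound on $g$. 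The per-step growth factor $\eta$ is then controlled via Lemma~\ref{lem:estimates_on_measure} with the correct exponent $\theta$ (not $1/2$), and can be made arbitrarily close to $1$ by enlarging the base constant --- this is what produces arbitrarily small $\varepsilon$. Your one-shot Cauchy--Schwarz over all $2^N$ branches, together with the crude bound $\lVert h_\lambda\chi_{L_a^{-1}(I)}\rVert_2^2 \leq \lVert h_\lambda\rVert_2^2$, discards precisely this structure.
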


To prove Proposition~\ref{pro:est_on_bernoulli} we will need the
following lemma.

\begin{lemma} \label{lem:estimates_on_measure}
  Put $\theta = - \frac{\log 2}{\log \lambda}$ and fix $\lambda \in
  (\frac{1}{2},1)$. Then there is a constant $K$ such that
  \[
  \frac{K}{2} (1 - \lambda)^{-\theta} r^\theta \leq \mu_\lambda
  ([0,r)) \leq 2 K (1 - \lambda)^{-\theta} r^\theta.
  \]
  Moreover, there is a constant $c$ such that for any interval $I
  \subset [0,1]$ of length $r$, holds
  \[
  \mu_\lambda (I) \geq c r^\theta.
  \]
\end{lemma}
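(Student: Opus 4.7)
The plan is to exploit the self-similarity of $\mu_\lambda$ encoded in the functional equation \eqref{eq:functionalequation_measureform}, reducing estimates of $\mu_\lambda$ on a small interval to estimates on an interval of fixed reference size.

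For the two-sided bound on $\mu_\lambda([0, r))$, the key observation is that $S_2^{-1}([0, r))$ lies entirely in $(-\infty, 0)$ whenever $r \leq 1-\lambda$, and therefore carries no $\mu_\lambda$-mass. Hence \eqref{eq:functionalequation_measureform} collapses in this range to
\[
\mu_\lambda([0, r)) = \tfrac{1}{2}\, \mu_\lambda([0, r/\lambda)), \qquad r \leq 1-\lambda,
\]
and iteration yields $\mu_\lambda([0, r)) = 2^{-n}\, \mu_\lambda([0, r/\lambda^n))$. I would choose $n$ so that $r/\lambda^n$ lands in the fixed window $(1-\lambda, (1-\lambda)/\lambda]$. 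Setting $K := \mu_\lambda([0, 1-\lambda))$, the collapsed equation applied at $r = 1-\lambda$ gives $\mu_\lambda([0, (1-\lambda)/\lambda)) = 2K$, so by monotonicity $\mu_\lambda([0, r/\lambda^n)) \in [K, 2K]$. Since $\lambda^\theta = 1/2$, the range $r \in (\lambda^n (1-\lambda), \lambda^{n-1}(1-\lambda)]$ converts to $2^{-n} \asymp (1-\lambda)^{-\theta} r^\theta$, and the stated pair of inequalities follows.

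For the lower bound on an arbitrary interval $I \subset [0,1]$ of length $r$, I would iterate a similar reduction. Whenever $r < 2\lambda - 1$, the interval $I$ lies entirely in either $[0, \lambda]$ or $[1-\lambda, 1]$, so at least one of $S_1^{-1}(I)$, $S_2^{-1}(I)$ is a subset of $[0,1]$ of length $r/\lambda$; dropping the other (nonnegative) term in \eqref{eq:functionalequation_measureform} gives $\mu_\lambda(I) \geq \tfrac{1}{2}\, \mu_\lambda(S_i^{-1}(I))$ for suitable $i$. Iterating $n$ times until the enlarged interval $I_n \subset [0, 1]$ has length in $[2\lambda - 1, (2\lambda-1)/\lambda)$, and converting $2^{-n}$ to $r^\theta$ via $\lambda^\theta = 1/2$ as before, one arrives at $\mu_\lambda(I) \geq \tfrac{1}{2}(2\lambda-1)^{-\theta}\, \mu_\lambda(I_n)\, r^\theta$.

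The only remaining step -- and the one I expect to be the main obstacle -- is to bound $\mu_\lambda(I_n)$ below by a positive constant depending only on $\lambda$; that is, to show
\[
c_0(\lambda) := \inf\{\, \mu_\lambda(J) : J \subset [0,1] \text{ an interval with } |J| \geq 2\lambda - 1 \, \} > 0.
\]
I would deduce this from the already noted fact that $\mu_\lambda$ assigns positive measure to every subinterval of $[0,1]$ with nonempty interior: the function $a \mapsto \mu_\lambda([a, a + 2\lambda - 1])$ is continuous (since $\mu_\lambda$ is non-atomic) and strictly positive on the compact set of admissible $a$, hence attains a positive minimum. Taking $c := \tfrac{1}{2}(2\lambda-1)^{-\theta} c_0(\lambda)$ finishes the second claim, and absorbing any boundary cases where $r > 2\lambda - 1$ into the constant presents no difficulty.
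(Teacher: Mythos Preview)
Your proposal is correct and follows essentially the same route as the paper's proof: both arguments set $K = \mu_\lambda([0,1-\lambda))$, use the collapse of \eqref{eq:functionalequation_measureform} on $[0,r)$ for $r \le 1-\lambda$ to get the exact relation $\mu_\lambda([0,\lambda^k(1-\lambda))) = K\,2^{-k}$, and then sandwich a general $[0,r)$ between two consecutive such intervals; for the second part both iterate the observation that when $|I|<2\lambda-1$ at least one preimage $S_i^{-1}(I)$ stays inside $[0,1]$, reducing to an interval of length at least $2\lambda-1$ and invoking the positive infimum $c_0$. The only cosmetic difference is that the paper phrases the iteration via forward images $S_{i_1}\circ\cdots\circ S_{i_k}$ while you phrase it via preimages, and you supply an explicit compactness/non-atomicity argument for $c_0>0$ where the paper simply asserts it.
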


\begin{proof}
  Let $V_0 = [0, 1 - \lambda)$. Then $[0,1] \cap S_2^{-1} (V_0) =
    \emptyset$ and by \eqref{eq:functionalequation_measureform}, we
    have $K = \mu_\lambda (V_0)$, for some constant $K$.

  Now, let $V_k$ be defined recursively by $V_k = S_1 (V_{k-1}) = [0,
  \lambda^k (1 - \lambda))$. Then $\mu_\lambda (V_k) = 2^{-k}
  \mu_\lambda (V_0)$, so
  \[
  \mu_\lambda (V_k) = K 2^{-k}.
  \]
  Consider the interval $[0,r)$, were $r \leq 1 - \lambda$. We let $n$
    be an integer such that
  \[
  V_n \subset [0,r) \subset V_{n-1}.
  \]
  Then 
  \[
  \frac{\log r - \log (1- \lambda)}{\log \lambda} \leq n \leq 1 +
  \frac{\log r - \log (1- \lambda)}{\log \lambda}.
  \]
  We have 
  \[
  \mu_\lambda ([0,r)) \geq \mu_\lambda (V_n) = K 2^{-n} \geq
    \frac{K}{2} (1 - \lambda)^{-\theta} r^\theta.
  \]
  and
  \[
  \mu_\lambda ([0,r)) \leq \mu_\lambda (V_{n-1}) = K 2^{-n+1} \leq 2
    K (1 - \lambda)^{-\theta} r^\theta.
  \]

  Let $c_0$ be the minimal $\mu_\lambda$-measure of a sub-interval of
  $[0,1]$ of length $2 \lambda - 1$. We have $c_0 > 0$. Consider an
  interval $I \subset [0,1]$. If $|I| < 2 \lambda - 1$, then at least
  one of the intervals $S_1^{-1} (I)$ and $S_2^{-1} (I)$ are
  sub-intervals of $[0,1]$. So if $|I| = \lambda^k (2 \lambda - 1)$
  then there is an interval $I' \subset [0,1]$ of length $2 \lambda -
  1$, such that $I = S_{i_1} \circ \cdots \circ S_{i_k} (I')$. By
  \eqref{eq:functionalequation_measureform} we have $\mu_\lambda (I)
  \geq 2^{-k} c_0$.

  Finally, if $I \subset [0,1]$ is an arbitrary interval of length $r$
  we can choose an interval $J \subset I$ with $|J| = \lambda^{n} (2
  \lambda -1)$, but $\lambda |I| \leq |J|$. Since $\mu_\lambda (I)
  \geq \mu_\lambda (J) \geq 2^{-n} c_0 = c_0 (2 \lambda - 1)^{-\theta}
  |J|^\theta \geq c_0 (2 \lambda - 1)^{-\theta} \lambda^\theta
  r^\theta$, we conclude the theorem with $c = c_0 (2 \lambda -
  1)^{-\theta} \lambda^\theta$.
\end{proof}

\begin{proof}[Proof of Proposition~\ref{pro:est_on_bernoulli}]
  Fix $\rho > 0$ and consider the intervals $I_x = B_\rho (x)$. The
  function
  \[
  f (x) = \frac{\lVert h_\lambda \chi_{I_x} \rVert_2^2}{\lVert
    h_\lambda \chi_{I_x} \rVert_1^2}
  \]
  defined on the interval $[0,1]$, is clearly continuous, and hence it
  is bounded on $[0,1]$. Moreover, using \eqref{eq:functionalequation}
  one can prove that for $x = 0$ and $x = 1$, the function
  \[
  g (r) = \frac{r \lVert h_\lambda \chi_{B_r (x)} \rVert_2^2}{\lVert
    h_\lambda \chi_{B_r (x)} \rVert_1^2}
  \]
  is bounded when $r \to 0$. This is done in the following way. Assume
  that $x = 0$. The case $x = 1$ is similar by symmetry.  Let $r$ be
  fixed. We are going to estimate $g (\lambda r)$ in terms of $g
  (r)$. Let $J = [0, \lambda r]$.  There are two intervals $J_1$ and
  $J_2$ such that $J = S_1 (J_1) = S_2 (J_2)$. We clearly have $J_1 =
  [0, r]$, and if $r$ is sufficiently small, then $J_2 \cap [0, 1] =
  \emptyset$. Therefore, we assume that $r$ is so small that we have
  $J_2 \cap [0, 1] = \emptyset$.

  Now, $\int_{J_2} h_\lambda \, \mathrm{d} \nu = 0$ and
  \eqref{eq:functionalequation_measureform} implies that
  \begin{equation}
    \int_J h_\lambda \, \mathrm{d} \nu = \frac{1}{2} \int_{J_1}
    h_\lambda \, \mathrm{d} \nu + \frac{1}{2} \int_{J_2} h_\lambda \,
    \mathrm{d} \nu = \frac{1}{2} \int_{J_1} h_\lambda \, \mathrm{d}
    \nu,
    \label{eq:g_estimate}
  \end{equation}
  and by \eqref{eq:functionalequation},
  \begin{equation}
    \int_J h_\lambda^2 \, \mathrm{d} \nu = \frac{1}{4 \lambda^2}
    \int_J \bigl( h_\lambda \circ S_1^{-1} + h_\lambda \circ S_2^{-1}
    \bigr)^2 \, \mathrm{d} \nu = \frac{1}{4 \lambda} \int_{J_1}
    h_\lambda^2 \, \mathrm{d} \nu. \label{eq:g_estimate2}
  \end{equation}
  By the definition of $g$ we have
  \[
  g (r) = r \frac{\int_{J_1} h_\lambda^2 \, \mathrm{d} \nu}{\bigl(
    \int_{J_1} h_\lambda \, \mathrm{d} \nu \bigr)^2}.
  \]
  Hence, by \eqref{eq:g_estimate} and \eqref{eq:g_estimate2},
  \[
  g(\lambda r) = \lambda r \frac{\int_{J} h_\lambda^2 \, \mathrm{d}
    \nu}{\bigl( \int_{J} h_\lambda \, \mathrm{d} \nu \bigr)^2} =
  \lambda r \frac{\frac{1}{4 \lambda} \int_{J_1} h_\lambda^2 \,
    \mathrm{d} \mu}{\frac{1}{4} \bigl( \int_{J_1} h_\lambda \,
    \mathrm{d} \nu \bigr)^2} = g (r).
  \]
  By induction, we conclude that $g (\lambda^n r) = g(r)$ for all $n >
  0$. It is moreover easy to see that $g$ must be bounded on the
  interval $[\lambda \rho, \rho]$, and so $g$ is bounded on $(0,
  \rho]$. (Indeed, $g$ is continuous on any closed sub-interval of
  $(0,1)$.)

  We have proved that both $f$ and $g$ are bounded.
  Let $C_0$ be a constant such that
  $f \leq C_0 / (2\rho) = C_0 / |I_x|$ on $[-\rho, 1 + \rho]$. This
  means that we have
  \begin{equation} \label{eq:induction}
    |I| \lVert h_\lambda \chi_{I} \rVert_2^2 \leq C_0 \lVert h_\lambda
    \chi_{I} \rVert_1^2
  \end{equation}
  for all intervals $I \subset [0,1]$ of length $2 \rho$. We also let
  $C_1$ be a constant such that $g \leq C_1$. Hence we have
  \begin{equation} \label{eq:induction2}
    r \lVert h_\lambda \chi_{[0, r)} \rVert_2^2 \leq C_1 \lVert
      h_\lambda \chi_{[0, r)} \rVert_1^2
  \end{equation}
  for all $0 < r < 1$. By symmetry of $\mu_\lambda$, we have the same
  inequality for the intervals $(1-r, 1]$.

  We will now proceed by induction in the following way. Suppose
  \eqref{eq:induction} holds for all intervals $I \subset [0,1]$ of a
  certain length $L < 1$.  Let $J \subset [0,1]$ be an interval with
  $|J| = \lambda L$. We want to prove that
  \[
  |J| \lVert h_\lambda \chi_J \rVert_2^2 \leq C_2 \lVert h_\lambda
  \chi_J \rVert_1^2.
  \]
  There are two intervals $J_1$ and $J_2$ such
  that $J = S_1 (J_1) = S_2 (J_2)$. By \eqref{eq:functionalequation}
  we have
  \begin{equation}
    \int_J h_\lambda \, \mathrm{d} \nu = \frac{1}{2} \int_{J_1}
    h_\lambda \, \mathrm{d} \nu + \frac{1}{2} \int_{J_2} h_\lambda \,
    \mathrm{d} \nu,
    \label{eq:L1}
  \end{equation}
  and
  \begin{align*}
    \int_J h_\lambda^2 \, \mathrm{d} \nu &= \frac{1}{4 \lambda^2}
    \int_J \bigl( h_\lambda \circ S_1^{-1} + h_\lambda \circ S_2^{-1}
    \bigr)^2 \, \mathrm{d} \nu \\ &= \frac{1}{4 \lambda} \int_{J_1}
    h_\lambda^2 \, \mathrm{d} \nu + \frac{1}{4 \lambda} \int_{J_2}
    h_\lambda^2 \, \mathrm{d} \nu + \frac{1}{2 \lambda} \int_{J_1}
    h_\lambda \cdot h_\lambda \circ T \, \mathrm{d} \nu,
  \end{align*}
  where $T = S_2^{-1} \circ S_1$ is a translation. By the
  Cauchy--Bunyakovsky--Schwarz inequality we have
  \[
  \int_{J_1} h_\lambda \cdot h_\lambda \circ T \, \mathrm{d} \nu \leq
  \biggl( \int_{J_1} h_\lambda^2 \, \mathrm{d} \nu \int_{J_2}
  h_\lambda^2 \, \mathrm{d} \nu \biggr)^\frac{1}{2},
  \]
  and so
  \begin{align*}
    \int_J h_\lambda^2 \, \mathrm{d} \nu &\leq \frac{1}{4 \lambda}
    \int_{J_1} h_\lambda^2 \, \mathrm{d} \nu + \frac{1}{4 \lambda}
    \int_{J_2} h_\lambda^2 \, \mathrm{d} \nu + \frac{1}{2 \lambda}
    \biggl( \int_{J_1} h_\lambda^2 \, \mathrm{d} \nu \int_{J_2}
    h_\lambda^2 \, \mathrm{d} \nu \biggr)^\frac{1}{2} \\ &= \frac{1}{4
      \lambda} \biggl( \biggl( \int_{J_1} h_\lambda^2 \, \mathrm{d}
    \nu \biggr)^{\frac{1}{2}} + \biggl( \int_{J_2} h_\lambda^2 \,
    \mathrm{d} \nu \biggr)^{\frac{1}{2}} \biggr)^2.
  \end{align*}
  We want to use \eqref{eq:induction} on each of the integrals
  $\int_{J_1} h_\lambda^2 \, \mathrm{d}\nu$ and $\int_{J_2}
  h_\lambda^2 \, \mathrm{d}\nu$ above, but it may happen that one of
  $J_1$ and $J_2$ is not a subset of $[0,1]$. Assume therefore that
  $J_1 \subset [0,1]$ and that $J_2$ is not necessarily a subset of
  $[0,1]$. (The case $J_2 \subset [0,1]$ and $J_1$ is not necessarily
  a subset of $[0,1]$ is analogous.) Let $\tilde{J}_2$ be the
  intersection $\tilde{J}_2 = J_2 \cap [0,1]$. We have $0 \leq
  |\tilde{J}_2| \leq |J_1|$. Our estimate \eqref{eq:induction2}
  implies that
  \[
  \int_{J_2} h_\lambda^2 \, \mathrm{d} \nu = \int_{\tilde{J}_2}
  h_\lambda^2 \, \mathrm{d} \nu \leq \frac{C_1}{|\tilde{J}_2|} \biggl(
  \int_{\tilde{J}_2} h_\lambda \, \mathrm{d} \nu \biggr)^2 =
  \frac{C_1}{|\tilde{J}_2|} \biggl( \int_{{J}_2} h_\lambda \,
  \mathrm{d} \nu \biggr)^2,
  \]
  and \eqref{eq:induction} implies that
  \[
  \int_{J_1} h_\lambda^2 \, \mathrm{d} \nu \leq
  \frac{C_0}{|J_1|} \biggl( \int_{J_1} h_\lambda \, \mathrm{d}
  \nu \biggr)^2.
  \]
  Hence
  \begin{align}
    \nonumber \int_J h_\lambda^2 \, \mathrm{d} \nu &\leq \frac{1}{4
      \lambda} \biggl( \biggl( \int_{J_1} h_\lambda^2 \, \mathrm{d}
    \nu \biggr)^{\frac{1}{2}} + \biggl( \int_{J_2} h_\lambda^2 \,
    \mathrm{d} \nu \biggr)^{\frac{1}{2}} \biggr)^2 \\ \nonumber &\leq
    \frac{1}{4 \lambda} \biggl(
    \frac{C_0^\frac{1}{2}}{|J_1|^\frac{1}{2}} \int_{J_1} h_\lambda \,
    \mathrm{d} \nu + \frac{C_1^\frac{1}{2}}{|\tilde{J}_2|^\frac{1}{2}}
    \int_{\tilde{J}_2} h_\lambda \, \mathrm{d} \nu \biggr)^2 \\ &=
    \frac{1}{\lambda} \frac{1}{|J_1|} \Biggl( \frac{ C_0^\frac{1}{2}
      \int_{J_1} h_\lambda \, \mathrm{d} \nu + C_1^\frac{1}{4}
      \frac{|J_1|^\frac{1}{2}}{|\tilde{J}_2|^\frac{1}{2}}
      \int_{\tilde{J}_2} h_\lambda \, \mathrm{d} \nu }{ \int_{J_1}
      h_\lambda \, \mathrm{d} \nu + \int_{\tilde{J}_2} h_\lambda \,
      \mathrm{d} \nu } \Biggr)^2 \biggl( \int_J h_\lambda \,
    \mathrm{d} \nu \biggr)^2. \label{eq:inductionstep}
  \end{align}
  We want to bound
  \[
    Q = \frac{ C_0^\frac{1}{2} \int_{J_1} h_\lambda \, \mathrm{d}
    \nu + C_1^\frac{1}{4}
    \frac{|J_1|^\frac{1}{2}}{|\tilde{J}_2|^\frac{1}{2}}
    \int_{\tilde{J}_2} h_\lambda \, \mathrm{d} \nu }{ \int_{J_1}
    h_\lambda \, \mathrm{d} \nu + \int_{\tilde{J}_2} h_\lambda \,
    \mathrm{d} \nu },
  \]
  and note that it is a weighted average of $C_0^\frac{1}{2}$ and
  $C_1^\frac{1}{2}
  \frac{|J_1|^\frac{1}{2}}{|\tilde{J}_2|^\frac{1}{2}}$. Let $d =
  |J_1|$ and $e = |\tilde{J}_2|^\frac{1}{2}$. Then $0 \leq e \leq
  d$. If we take $C_0$ much larger than $C_1$, we may conclude, by
  Lemma~\ref{lem:estimates_on_measure} and the fact that $Q$ is a
  weighted average, that
  \begin{align} \label{eq:Qest}
    \nonumber Q & \leq \frac{C_0^\frac{1}{2} c d^\theta +
      C_1^\frac{1}{2}
      \frac{|J_1|^\frac{1}{2}}{|\tilde{J}_2|^\frac{1}{2}} \frac{K}{2}
      (1 - \lambda)^{-\theta} e^\theta }{c d^\theta + \frac{K}{2} (1 -
      \lambda)^{-\theta} e^\theta} \\ &= \frac{C_0^\frac{1}{2} c (1 -
      \lambda)^\theta + C_1^\frac{1}{2} \frac{K}{2}
      \bigl(\frac{e}{d}\bigr)^{\theta-\frac{1}{2}} }{c (1 -
      \lambda)^\theta + \frac{K}{2} \bigl(\frac{e}{d} \bigr)^\theta }
    \leq C_0^\frac{1}{2} \eta,
  \end{align}
  where
  \[
  \eta = \sup_{0 \leq t \leq 1} \frac{2 c (1 - \lambda)^\theta + (C_1
    / C_0)^\frac{1}{2} K t^{\theta-\frac{1}{2}} }{2 c (1 -
    \lambda)^\theta + K t^\theta }.
  \]
 
  Combining \eqref{eq:inductionstep} and \eqref{eq:Qest}, we get that
  \begin{equation} \label{eq:induction3}
    \int_J h_\lambda^2 \, \mathrm{d} \nu \leq \frac{C_0 \eta^2}{|J|}
    \biggl( \int_J h_\lambda \, \mathrm{d} \nu \biggr)^2.
  \end{equation}

  Hence we have determined that if \eqref{eq:induction} holds for all
  intervals of a fixed size $L$, then \eqref{eq:induction3} holds for
  intervals of length $\lambda L$. By induction, starting with
  \eqref{eq:induction} for intervals of length $2 \rho$, we conclude
  that
  \begin{equation} \label{eq:inductionn}
    \int_J h_\lambda^2 \, \mathrm{d} \nu \leq \frac{C_0
      \eta^{2n}}{|J|} \biggl( \int_J h_\lambda \, \mathrm{d} \nu
    \biggr)^2
  \end{equation}
  holds for any interval of length $2 \lambda^n \rho$. This is not yet
  quite what we want. However, by choosing $C_0$ large, we can make
  $\eta$ arbitrarily close to $1$. In this way, for any $\varepsilon$,
  we will achieve the estimate
  \begin{equation*}
    \int_J h_\lambda^2 \, \mathrm{d} \nu \leq
    \frac{C_\varepsilon}{|J|^{1+\varepsilon}} \biggl( \int_J h_\lambda
    \, \mathrm{d} \nu \biggr)^2
  \end{equation*}
  for any interval of length $2 \lambda^n \rho$.  Since $C_0$ can be
  chosen to depend continuously on $\rho$, we conclude
  \eqref{eq:est_on_bernoulli} for intervals of any length.
\end{proof}

\subsection{Some estimates using Fourier analysis} \label{sec:fourier}

We let $\mu_{\lambda,k}$ be the measures defined in the previous
section. To emphasise the dependence on $\alpha$, which will prove
important in this section, we denote $\mu_{\lambda,k}$ by
$\mu_{\alpha,\lambda,k}$ and we let $h_{\alpha,\lambda,k}$ denote the
densities of the measures $\mu_{\alpha,\lambda,k}$. We are interested
in determining for which $\alpha$, $\lambda$ and $s$, there is a
constant $C$ such that
\[
\iint |x - y|^{-s} \, \mathrm{d}\mu_{\alpha,\lambda,k} (x)
\mathrm{d}\mu_{\alpha,\lambda,k} (y) < C,
\]
holds for infinitely many $k$. In this section, will prove the
following proposition.
\begin{proposition} \label{pro:energy_of_bernoulli}
  Let $\alpha s < 1$. If $\lambda \in (\frac{1}{2}, 0.64)$, then,
  almost surely, there exists a constant $C$ such that
  \begin{equation} \label{eq:energy_of_bernoulli} \iint |x-y|^{-s} \,
    \mathrm{d}\mu_{\alpha,\lambda,k} (x) \mathrm{d}
    \mu_{\alpha,\lambda,k} (y) \leq C,
  \end{equation}
  holds for infinitely many $k$.
\end{proposition}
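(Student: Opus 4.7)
My approach is to pass to the Fourier side, average the Riesz $s$-energy over $\lambda \in (\frac{1}{2}, 0.64)$, and extract the almost sure conclusion via Fatou's lemma. Recall that for $0 < s < 1$ there is a constant $c_s$ with
\[
\iint |x-y|^{-s} \, \mathrm{d}\mu(x) \mathrm{d}\mu(y) = c_s \int_{\R} |\hat{\mu}(\xi)|^2 |\xi|^{s-1} \, \mathrm{d}\xi.
\]
From \eqref{eq:mudef}, $\mu_{\alpha,\lambda,k}$ is the convolution of the uniform atomic measure on $F_{\lambda,k}$ with the normalised Lebesgue measure on $[-r_k, r_k]$, so its Fourier transform factors as $\hat{\mu}_{\alpha,\lambda,k}(\xi) = \varphi_{\lambda,k}(\xi) \psi_k(\xi)$, where
\[
\varphi_{\lambda,k}(\xi) = \prod_{n=0}^{k} e^{-i \pi (1-\lambda) \lambda^n \xi} \cos\bigl(\pi (1-\lambda) \lambda^n \xi\bigr)
\]
is a partial product for the Fourier transform of the classical Bernoulli convolution $\mu_\lambda$, and $\psi_k(\xi) = \sin(2\pi r_k \xi)/(2\pi r_k\xi)$ satisfies $|\psi_k(\xi)| \leq \min\{1, (2\pi r_k|\xi|)^{-1}\}$.

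Applying Tonelli to the double integral
\[
\int_{1/2}^{0.64} \int_{\R} |\hat{\mu}_{\alpha,\lambda,k}(\xi)|^2 |\xi|^{s-1} \, \mathrm{d}\xi \, \mathrm{d}\lambda
\]
reduces matters to controlling $\int_{1/2}^{0.64} |\varphi_{\lambda,k}(\xi)|^2 \, \mathrm{d}\lambda$. The key input, to be furnished in Section~\ref{sec:fourierproof}, is an averaged transversality estimate
\[
\int_{1/2}^{0.64} |\varphi_{\lambda,k}(\xi)|^2 \, \mathrm{d}\lambda \leq C |\xi|^{-\gamma} \qquad (|\xi| \geq 1),
\]
uniform in $k$, with some $\gamma > 0$ that can be made as close to $1$ as desired. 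Splitting the $\xi$-integral at $|\xi| = r_k^{-1} = 2^{\alpha k}$ and using $|\psi_k| \leq 1$ on the low-frequency range, $|\psi_k|^2 \leq C r_k^{-2} \xi^{-2}$ on the high-frequency range, together with the trivial bound $|\varphi_{\lambda,k}| \leq 1$ near the origin, the low-frequency piece is bounded by $\mathrm{const} + C\int_1^{2^{\alpha k}}|\xi|^{s-1-\gamma}\,\mathrm{d}\xi$ and the high-frequency piece by $C r_k^{-2}\int_{2^{\alpha k}}^{\infty}|\xi|^{s-3-\gamma}\,\mathrm{d}\xi = C' 2^{\alpha k(s-\gamma)}$. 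Both are uniformly bounded in $k$ provided $\gamma > s$, which is achievable because $\alpha s < 1$ forces $s < 1/\alpha < 1$ while $\gamma$ may be chosen close to $1$.

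Having thus established
\[
\int_{1/2}^{0.64} \iint |x-y|^{-s} \, \mathrm{d}\mu_{\alpha,\lambda,k}(x)\, \mathrm{d}\mu_{\alpha,\lambda,k}(y) \, \mathrm{d}\lambda \leq C
\]
uniformly in $k$, Fatou's lemma yields
\[
\liminf_{k \to \infty} \iint |x-y|^{-s} \, \mathrm{d}\mu_{\alpha,\lambda,k}(x) \mathrm{d}\mu_{\alpha,\lambda,k}(y) < \infty
\]
for almost every $\lambda \in (\frac{1}{2}, 0.64)$, which is precisely \eqref{eq:energy_of_bernoulli} along a subsequence. The principal obstacle is the averaged transversality estimate with $\gamma$ close to $1$; I would prove it by a dyadic decomposition of the cosine product according to the scale $\lambda^n |\xi| \approx 1$, combined with the pointwise derivative transversality $\bigl|\tfrac{\mathrm{d}}{\mathrm{d}\lambda} \bigl((1-\lambda) \sum_{n} \epsilon_n \lambda^n \bigr)\bigr| \geq c \max_n \lambda^n$, valid for $\epsilon_n \in \{-1, 0, 1\}$ and $\lambda$ in this range. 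The threshold $0.64$ presumably reflects the largest range over which this derivative transversality holds with a constant strong enough to yield $\gamma$ above the desired $s$.
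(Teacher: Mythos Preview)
Your overall plan---pass to the Fourier side, average the energy over $\lambda$, and apply Fatou---is exactly right and is what the paper does. The gap is in the averaged transversality estimate you invoke. The bound
\[
\int_{1/2}^{0.64} |\varphi_{\lambda,k}(\xi)|^2\,\mathrm{d}\lambda \leq C|\xi|^{-\gamma}\qquad(|\xi|\geq 1),
\]
uniform in $k$, cannot hold for any $\gamma>0$. Writing $\varphi_{\lambda,k}(\xi)=2^{-(k+1)}\sum_{a\in\Sigma_k}e^{-2\pi i\xi\pi_k(a)}$ and expanding the square, the diagonal terms $a=b$ contribute exactly $(0.64-\tfrac12)\,2^{-(k+1)}$ to the $\lambda$-average, independently of $\xi$, while the off-diagonal terms tend to zero as $|\xi|\to\infty$ by Riemann--Lebesgue. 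So for each fixed $k$ the left-hand side has a strictly positive limit as $|\xi|\to\infty$. The correct bound is of the shape $C(|\xi|^{-1}+2^{-k})$. Carrying the $2^{-k}$ piece through your frequency decomposition produces extra contributions of size $2^{(\alpha s-1)k}$ in both the low- and high-frequency ranges, and it is precisely here---and nowhere else---that the hypothesis $\alpha s<1$ enters. Your argument as written uses only $\gamma>s$, which would yield the conclusion for every $s<1$ regardless of $\alpha$; that is a red flag, since $\mu_{\alpha,\lambda,k}$ is supported on $\sim 2^k$ intervals of length $2^{-\alpha k}$ and its $s$-energy in fact blows up like $2^{(\alpha s-1)k}$ when $\alpha s>1$.

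The paper organises the computation differently. Rather than seek a pointwise-in-$\xi$ bound on the $\lambda$-average, it replaces the weight $|\xi|^{t}$ by a step function, integrates explicitly over unit $\xi$-intervals to produce expressions $\sin(m\,\theta_{a,b}(\lambda))/\theta_{a,b}(\lambda)$, performs an Abel summation in $m$, and only then applies transversality in the form of Lemma~\ref{lem:transint}. It also works with $|\varphi_{\lambda,k}|^4$ against $|\xi|^{2s-1}$ rather than the square against $|\xi|^{s-1}$, proving the stronger Proposition~\ref{pro:fourierint}; that fourth-power estimate is what is reused in Section~\ref{sec:convolution} (via convolution and Cauchy--Schwarz) to extend the conclusion from $\lambda\in(\tfrac12,0.64)$ to all of $(\tfrac12,1)$.
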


Proposition~\ref{pro:energy_of_bernoulli} implies together with
Proposition~\ref{pro:est_on_bernoulli} the statement of
Theorem~\ref{the:bernoulli} for almost all $\lambda \in (\frac{1}{2},
0.64)$.

We will estimate the integrals in
Proposition~\ref{pro:energy_of_bernoulli} using Fourier transforms. We
use the convention that the Fourier transform of a function $f$ is the
function
\[
\hat{f} (\xi) = \int e^{-i2\pi\xi x} f(x) \, \mathrm{d} x.
\]

Writing as before, $R_s h (x) = |\cdot|^{-s} * h (x) = \int |x-y|^{-s}
h (y) \, \mathrm{d}y$, we have, using Parseval's formula, that
\begin{multline*}
  \iint |x - y|^{-s} \, \mathrm{d} \mu_{\alpha,\lambda,k} (x)
  \mathrm{d}\mu_{\alpha,\lambda,k} (y) = \int h_{\alpha,\lambda,k} (x)
  R_s h_{\alpha,\lambda,k} (x) \, \mathrm{d} x \\ = \int
  \overline{\hat{h}_{\alpha,\lambda,k} (\xi)} \widehat{R_s
    h_{\alpha,\lambda,k}} (\xi) \, \mathrm{d} \xi = c_s \int
  |\hat{h}_{\alpha,\lambda,k} (\xi)|^2 |\xi|^{s-1} \, \mathrm{d} \xi,
\end{multline*}
where $c_s$ is a constant depending only on $s$. We are going to
estimate $\int |\hat{h}_{\alpha,\lambda,k} (\xi)|^2 |\xi|^{s-1} \,
\mathrm{d} \xi$.

To determine the Fourier transform of $h_{\alpha,\lambda,k}$ we note
that the Fourier transform of the measure $\frac{1}{2} (\delta_a +
\delta_{0})$ is $e^{-i \pi a \xi} \cos (\pi a \xi)$. The measure
$\mu_{\alpha,\lambda,k}$ is the convolution of the measures
$\frac{1}{2} (\delta_{\lambda^n} + \delta_0)$, $n=0,1,\ldots, k$, and
the uniform mass-distribution on the interval $[-2^{-\alpha k},
  2^{-\alpha k}]$. Hence we have
\begin{equation} \label{eq:hproduct}
  \hat{h}_{\alpha,\lambda,k} (\xi) = \frac{\phi_{\lambda,k} (\xi)}{2
    \pi} \frac{\sin(2^{-\alpha k} \xi)}{2^{-\alpha k} \xi}
  \prod_{n=0}^k \cos (\pi \lambda^n \xi),
\end{equation}
and 
\[
\hat{h}_{\alpha,\lambda} (\xi) = \phi_{\lambda} (\xi)
\prod_{n=0}^\infty \cos (\pi \lambda^n \xi),
\]
where $|\phi_{\lambda,k} (\xi)| = |\phi_\lambda (\xi)| = 1$. We also
introduce the related function
\begin{equation} \label{eq:gproduct}
g_{\alpha, \lambda, k} (\xi) = \eta_{\alpha, k} (\xi) \prod_{n=0}^k
\cos (\pi \lambda^n \xi),
\end{equation}
where
\[
\eta_{\alpha, k} (\xi) = \left\{ \begin{array}{ll} 1 & \text{if }
  |\xi| \leq 2^{\alpha k}, \\ \frac{2^{\alpha k}}{\xi} & \text{if }
  |\xi| > 2^{\alpha k}. \end{array} \right.
\]

It appears from \eqref{eq:hproduct} and \eqref{eq:gproduct} that $2
\pi |\hat{h}_{\alpha,\lambda,k}| \leq |g_{\alpha, \lambda, k}|$.
Hence,
\[
\int |\hat{h}_{\alpha,\lambda,k} (\xi)|^2 |\xi|^{s-1} \, \mathrm{d}
\xi \leq \int |g_{\alpha,\lambda,k} (\xi)|^2 |\xi|^{s-1} \, \mathrm{d}
\xi = 2 \int_0^\infty |g_{\alpha,\lambda,k} (\xi)|^2 |\xi|^{s-1} \,
\mathrm{d} \xi,
\]
Moreover, instead of estimating $\int |g_{\alpha,\lambda,k} (\xi)|^2
|\xi|^{s-1} \, \mathrm{d} \xi$, we can do a bit more, and instead
estimate $\int |g_{\alpha,\lambda,k} (\xi)|^4 |\xi|^{2s-1} \,
\mathrm{d} \xi$.

\begin{proposition} \label{pro:fourierint}
  For almost all $\lambda \in [\frac{1}{2}, 0.64]$ there are constants
  $C$ and $D$ such that
  \[
  \int |g_{\alpha,\lambda,k} (\xi)|^4 |\xi|^{2s-1} \, \mathrm{d}\xi <
  C 4^{(\alpha s - 1) k} + D,
  \]
  holds for infinitely many $k$.
\end{proposition}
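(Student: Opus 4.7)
The plan is to bound the integral in expectation over $\lambda \in [1/2, 0.64]$ using Fubini's theorem and a Solomyak-type transversality estimate, and then transfer this to almost every $\lambda$ via Fatou's lemma. The choice of $0.64$ lies comfortably inside the transversality regime $(1/2, 2^{-1/2})$ for Bernoulli convolutions.

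Concretely, by Fubini,
\begin{equation*}
  \int_{1/2}^{0.64} \int |g_{\alpha,\lambda,k}(\xi)|^4 |\xi|^{2s-1} \, \mathrm{d}\xi \, \mathrm{d}\lambda = \int \eta_{\alpha,k}(\xi)^4 |\xi|^{2s-1} \Phi_k(\xi) \, \mathrm{d}\xi,
\end{equation*}
where $\Phi_k(\xi) = \int_{1/2}^{0.64} \prod_{n=0}^k \cos^4(\pi\lambda^n\xi) \, \mathrm{d}\lambda$. Using $\cos^4 \leq \cos^2$ and recognizing that $\prod_n \cos^2(\pi\lambda^n\xi) = |\hat\mu_{\lambda,k}^{B}(\xi)|^2$ is the squared Fourier transform of the truncated Bernoulli convolution $\mu_{\lambda,k}^{B}$, the Solomyak--Peres--Schlag transversality bound gives $\Phi_k(\xi) \leq C_1 (1+|\xi|)^{-\tau}$ for a suitable $\tau > 0$, uniformly in $k$.

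I would then estimate the remaining integral $\int \eta_{\alpha,k}^4 |\xi|^{2s-1-\tau} \mathrm{d}\xi$ by splitting at $|\xi| = 2^{\alpha k}$: on $|\xi| \leq 2^{\alpha k}$, $\eta_{\alpha,k} = 1$, and on $|\xi| > 2^{\alpha k}$, the factor $\eta_{\alpha,k}^4 = 2^{4\alpha k}|\xi|^{-4}$ furnishes additional decay. Elementary computation of both pieces, together with the hypothesis $\alpha s < 1$, yields a bound of the form $C_2 4^{(\alpha s - 1)k} + D_2$ on the $\lambda$-averaged quantity. Finally, Fatou's lemma gives
\begin{equation*}
\int_{1/2}^{0.64} \Bigl(\liminf_k \int |g_{\alpha,\lambda,k}(\xi)|^4 |\xi|^{2s-1} \mathrm{d}\xi\Bigr)\, \mathrm{d}\lambda \leq \liminf_k \bigl(C_2 4^{(\alpha s - 1)k} + D_2\bigr) = D_2,
\end{equation*}
so the inner liminf is finite for almost every $\lambda$, which supplies constants $C(\lambda), D(\lambda)$ and infinitely many $k$ for which the claimed bound holds.

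The main obstacle is calibrating the transversality exponent $\tau$: the intermediate-frequency piece of the split integral scales like $(2^{\alpha k})^{2s-\tau}$ up to logarithmic corrections, so recovering the precise decay rate $4^{(\alpha s - 1)k}$ requires $\tau$ comparable to $2/\alpha$. This is what determines the admissible range of $\lambda$ and dictates the upper endpoint $0.64$ in the proposition.
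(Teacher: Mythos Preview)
Your plan contains a genuine gap at the step where you replace $\cos^4$ by $\cos^2$. After that replacement, the best one can say about $\Phi_k(\xi)=\int_{1/2}^{0.64}\prod_{n=0}^k\cos^2(\pi\lambda^n\xi)\,\mathrm{d}\lambda$ is
\[
\Phi_k(\xi)\ \lesssim\ |\xi|^{-1}+2^{-(k+1)},
\]
because the truncated product is the squared Fourier transform of a sum of $2^{k+1}$ point masses: the diagonal pairs $a=b$ in the expansion contribute the constant $(q-p)\,2^{-(k+1)}$ for every $\xi$, so no bound of the form $\Phi_k(\xi)\le C_1(1+|\xi|)^{-\tau}$ can hold uniformly in $k$ for any $\tau>0$. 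Feeding the correct bound into your split integral gives, on $[1,2^{\alpha k}]$,
\[
\int_1^{2^{\alpha k}} |\xi|^{2s-1}\,2^{-(k+1)}\,\mathrm{d}\xi\ \asymp\ 2^{(2\alpha s-1)k},
\]
which for $\tfrac12\le \alpha s<1$ diverges, whereas the proposition demands the rate $4^{(\alpha s-1)k}=2^{(2\alpha s-2)k}$, a full factor $2^{k}$ smaller. Thus your final paragraph misdiagnoses the role of $0.64$: that endpoint fixes the transversality \emph{region}, not the exponent $\tau$; no choice of endpoint below $2^{-1/2}$ pushes $\tau$ past $1$ once you have passed to $\cos^2$.

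The paper avoids this loss by keeping the fourth power. It writes $\prod_n\cos^4=P_k\cdot P_k$ with $P_k=\prod_n\cos^2=4^{-(k+1)}\sum_{a,b}\cos(\theta_{a,b}(\lambda)\xi)$, expands only one factor, and treats the off-diagonal $a\neq b$ by an Abel-summation argument that reduces to the transversality quantities $J_{a,b,m}=\int_p^q \sin(m\,\theta_{a,b}(\lambda))/\theta_{a,b}(\lambda)\,\mathrm{d}\lambda$ (bounded via Solomyak's lemma). For the diagonal $a=b$ the remaining factor $P_k$ is expanded a second time; the double diagonal then carries weight $4^{-(k+1)}$ rather than $2^{-(k+1)}$, and it is exactly this extra $2^{-k}$ that converts $2^{(2\alpha s-1)k}$ into the required $2^{(2\alpha s-2)k}$. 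So the fourth power is not cosmetic: it is what buys the missing factor, and your inequality $\cos^4\le\cos^2$ throws it away.
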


Since $|g_{\alpha,\lambda,k} (\xi)| \leq 1$, we can conclude from
Proposition~\ref{pro:fourierint}, that if $\alpha s < 1$, then for
almost all $\lambda \in [\frac{1}{2}, 0.64]$ there is a constant $C$
such that
\[
\int |\hat{h}_{\alpha,\lambda,k} (\xi)|^2 |\xi|^{s-1} \, \mathrm{d}\xi
\leq \int |g_{\alpha,\lambda,k} (\xi)|^2 |\xi|^{s-1} \, \mathrm{d}\xi
< C,
\]
holds for infinitely many $k$. Hence, Proposition~\ref{pro:fourierint}
implies Proposition~\ref{pro:energy_of_bernoulli}.

In fact, as we shall see in Section~\ref{sec:convolution},
Proposition~\ref{pro:fourierint} implies more, and will be important
to get the desired result not only for $\lambda$ in $(\frac{1}{2},
  0.64)$, but the entire interval $(\frac{1}{2}, 1)$.

\begin{remark} \label{rem:convolution}
  We have defined the measures $\mu_{\alpha, \lambda, k}$ so that
  their densities are normalised sums of indicator functions of
  intervals with radius $2^{-\alpha k}$ and centres in $F_{\lambda,
    k}$. Let $c > 0$. The proof of Proposition~\ref{pro:fourierint}
  will work without changes, if we had instead defined the measures
  $\mu_{\alpha, \lambda, k}$ such that their densities were based on
  intervals of radius $c 2^{-\alpha k}$ instead of $2^{-\alpha k}$.

  In Section~\ref{sec:convolution}, we will make use of this somewhat
  more general version of Proposition~\ref{pro:fourierint}, and we
  will then denote the corresponding measures by $\mu_{\alpha,
    \lambda, k, c}$, but to make the notation less heavy, we will
  only prove Proposition~\ref{pro:fourierint} in the case $c = 1$.
\end{remark}

\subsection{Proof of Proposition~\ref{pro:fourierint}} \label{sec:fourierproof}

Denote by $\lfloor N \rfloor$ the integer part of $N$, that is the
largest integer, not larger than $N$. We write the interval $[0,
  \infty)$ as the disjoint union $[0,\infty) = I_1 (k) \cup I_2 (k)
    \cup I_3 (k)$, where
\[
I_1 (k) = [0, 1), \quad I_2 (k) = [1, \lfloor 2^{\alpha k} \rfloor ),
    \quad \text{and} \quad I_3 (k) = [ \lfloor 2^{\alpha k} \rfloor,
      \infty),
\]
and treat separately the integrals
\[
J_i (\lambda) = \int_{I_i (k)} | g_{\alpha, \lambda, k} (\xi) |^4
|\xi|^{2s - 1} \, \mathrm{d} \xi, \quad i = 1, 2, 3.
\]

On the intervals $I_1 (k)$ and $I_2 (k)$ we have trivially that
\begin{equation} \label{eq:sinc1}
  \eta_{\alpha, k} (\xi)^4 = 1,
\end{equation}
and on the interval $I_3 (k)$, we have
\begin{equation} \label{eq:sinc2}
  \eta_{\alpha, k} (\xi)^4 \leq \frac{2^{4\alpha k}}{\xi^4}.
\end{equation}

Let us start by estimating $J_1 (\lambda)$. By \eqref{eq:sinc1} we get
that
\begin{equation} \label{eq:estJ1}
J_1 (\lambda) = \int_{I_1 (k)} |g_{\alpha,\lambda,k} (\xi)|^4
|\xi|^{2s-1} \, \mathrm{d} \xi \leq \int_{I_1 (k)}
|\xi|^{2s-1} \, \mathrm{d} \xi = \frac{1}{2 s} .
\end{equation}

Next, we estimate, using \eqref{eq:sinc1} and \eqref{eq:sinc2}, that
\begin{align*}
  J_2 (\lambda) = \int_{I_2 (k)} |g_{\alpha,\lambda,k} (\xi)|^4
  |\xi|^{2s-1} \, \mathrm{d} \xi & \leq \int_{1}^{\lfloor 2^{\alpha k}
    \rfloor} \biggl( \prod_{n = 0}^k \cos^2 (\lambda^{n} \xi)
  \biggr)^2 |\xi|^{2s-1} \, \mathrm{d} \xi, \\ J_3 (\lambda) =
  \int_{I_3 (k)} |g_{\alpha,\lambda,k} (\xi)|^4 |\xi|^{2s-1} \,
  \mathrm{d} \xi & \leq \int_{\lfloor 2^{\alpha k} \rfloor}^\infty
  \biggl( \prod_{n = 0}^k \cos^2 (\lambda^{n} \xi) \biggr)^2 2^{4
    \alpha k} |\xi|^{2s-5} \, \mathrm{d} \xi.
\end{align*}
We write
\[
P_k (\lambda, \xi) = \prod_{n=0}^k \cos^2 (\lambda^n \xi) =
\frac{1}{4^{k+1}} \sum_{a, b \in \Sigma_k} \cos ( \sum_{n=0}^k (a_n -
b_n) \lambda^n \xi),
\]
and put $\theta_{a, b} (\lambda) = \sum_{n=0}^k (a_n - b_n)
\lambda^n$.  Define $p_t \colon [1, \infty) \to \R$, such that $p_t
  (\xi) = n^{t}$ for $n \leq \xi < n+1$. Then, if $t < 0$, we have
  $p_t (\xi) \geq \xi^t$, and therefore
\begin{align*}
  J_2 (\lambda) &\leq \frac{1}{4^{k+1}} \sum_{a, b \in \Sigma_k}
  \int_{1}^{\lfloor 2^{\alpha k} \rfloor} \cos(\theta_{a, b} (\lambda)
  \xi) P_k (\lambda, \xi) p_{2s-1} (\xi) \, \mathrm{d}\xi, \\ J_3
  (\lambda) &\leq \frac{16^{\alpha k}}{4^{k+1}} \sum_{a, b \in
    \Sigma_k} \int_{\lfloor 2^{\alpha k} \rfloor}^\infty
  \cos(\theta_{a, b} (\lambda) \xi) P_k (\lambda, \xi) p_{2s-5} (\xi)
  \, \mathrm{d}\xi.
\end{align*}
We will now chop up the integrals above into sums of integrals over
intervals $[m, m+1]$. The number $2^{\alpha k}$ is not necessarily an
integer so it may not hold that $2^{\alpha k} = \lfloor 2^{\alpha k}
\rfloor$. However, to make the notation lighter, we shall adopt the
convention to write $\sum^{2^{\alpha k}}$ instead of $\sum^{\lfloor
  2^{\alpha k} \rfloor}$, and $\sum_{2^{\alpha k}}$ instead of
$\sum_{\lfloor 2^{\alpha k} \rfloor}$.  Hence we write
\begin{align*}
  J_2 (\lambda) &\leq \frac{1}{4^{k+1}} \sum_{a, b \in \Sigma_k}
  \sum_{m = 1}^{2^{\alpha k}} \int_m^{m+1} \cos(\theta_{a, b} (\lambda)
  \xi) P_k (\lambda, \xi) m^{2s-1} \, \mathrm{d}\xi, \\ J_3 (\lambda)
  &\leq \frac{16^{\alpha k}}{4^{k+1}} \sum_{a, b \in \Sigma_k} \sum_{m
    = 2^{\alpha k}}^\infty \int_m^{m+1} \cos(\theta_{a, b} (\lambda) \xi)
  P_k (\lambda, \xi) m^{2s-5} \, \mathrm{d}\xi.
\end{align*}
If $a$ and $b$ are two different elements in $\Sigma_k$, then
$\theta_{a, b} (\lambda) \neq 0$, except for finitely many
$\lambda$. Therefore, for $a \neq b$, and almost all $\lambda$, we
have
\begin{multline*}
  \int_m^{m+1} \cos(\theta_{a, b} (\lambda) \xi) P_k (\lambda, \xi)
  m^{2s-1} \, \mathrm{d}\xi \leq \int_m^{m+1} \cos(\theta_{a, b} (\lambda)
  \xi) m^{2s-1} \, \mathrm{d}\xi \\ = \biggl( \frac{\sin (\theta_{a, b}
    (\lambda) (m+1) )}{\theta_{a, b} (\lambda)} - \frac{\sin (\theta_{a, b}
    (\lambda) m )}{\theta_{a, b} (\lambda)} \biggr) m^{2s-1},
\end{multline*}
and we can thus write
\begin{align*}
  \sum_{m = 1}^{2^{\alpha k}} & \int_{m}^{m+1} \cos (\theta_{a, b}
  (\lambda) \xi ) P_k (\lambda, \xi ) m^{2s-1} \, \mathrm{d} \xi
  \mathrm{d} \lambda \\ \leq & \sum_{m = 1}^{2^{\alpha k}}
  \int_{m}^{m+1} \cos (\theta_{a, b} (\lambda) \xi ) m^{2s-1} \, \mathrm{d}
  \xi \mathrm{d} \lambda\\ =& \sum_{m = 2}^{1 + 2^{\alpha k}}
  \frac{\sin(\theta_{a, b} (\lambda) m)}{\theta_{a, b} (\lambda)} (m-1)^{2s-1} -
  \sum_{m = 1}^{2^{\alpha k}} \frac{\sin(\theta_{a, b} (\lambda) m)}{g_{a,
      b} (\lambda)} m^{2s-1} \\ =& \frac{\sin (\theta_{a, b} (\lambda) (1 +
    2^{\alpha k})}{\theta_{a, b} (\lambda)} (2^{\alpha k})^{2s - 1} -
  \frac{\sin (\theta_{a, b} (\lambda)}{\theta_{a, b} (\lambda)} \\ &+ \sum_{m =
    2}^{2^{\alpha k}} \frac{\sin (\theta_{a, b} (\lambda) m}{\theta_{a, b}
    (\lambda)} \bigl( (m - 1)^{2s - 1} - m^{2s - 1} \bigr).
\end{align*}

We now consider $[p,q] \subset (\frac{1}{2}, 0.64)$, and estimate
$\int_p^q J_2 (\lambda) \, \mathrm{d}\lambda$. For this purpose we
will use the following lemma. To state it, we use the notation $l
(a,b)$ to denote the smallest integer $l$ such that $a_l \neq b_l$ if
$a$ and $b$ are two different elements of $\Sigma_k$. We also put
\[
J_{a,b,m} = \int_p^q \frac{\sin (\theta_{a, b} (\lambda)
    m)}{\theta_{a, b} (\lambda)} \, \mathrm{d} \lambda.
\]

\begin{lemma} \label{lem:transint}
  There is a constant $K_1$ such that for all $a,b \in \Sigma_k$, with
  $a \neq b$, and all $m$,
  \[
  |J_{a,b,m}| \leq K_1 p^{-l (a,b)}.
  \]
\end{lemma}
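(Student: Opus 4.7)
The plan is to exploit the factorization
\[
\theta_{a,b}(\lambda) = \sum_{n=l}^{k} (a_n - b_n)\lambda^n = \lambda^l P_{a,b}(\lambda),
\]
where $P_{a,b}(\lambda) = (a_l-b_l) + \sum_{n=1}^{k-l}(a_{l+n}-b_{l+n})\lambda^n$ satisfies $P_{a,b}(0) = \pm 1$. Since $\lambda \in [p,q]$ gives $\lambda^l \geq p^l$, the factor $\lambda^{-l} \leq p^{-l}$ produced by inverting $\theta_{a,b}$ is exactly the source of the target $p^{-l}$ in the bound.

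First I would observe that $|\sin x / x| \leq 1$ and $|\sin x| \leq 1$ give the pointwise bound
\[
\left|\frac{\sin(\theta_{a,b}(\lambda)m)}{\theta_{a,b}(\lambda)}\right| \leq \min\bigl\{m, \ |\theta_{a,b}(\lambda)|^{-1}\bigr\},
\]
so neither the large-$m$ regime nor the zero set of $\theta_{a,b}$ causes blow-up once both bounds are combined. I would then decompose $[p,q]$ into the subintervals on which $\theta_{a,b}$ is strictly monotonic; since $\theta'_{a,b}$ is a polynomial of degree at most $k-1$, there are only boundedly many (in fact at most $k$) such pieces. On each monotonicity interval, the change of variables $u = \theta_{a,b}(\lambda)m$ transforms the integral into $\int (\sin u/u)\,du / \theta'_{a,b}(\lambda(u))$.

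At this point I would invoke the fact that $\mathrm{Si}(x) = \int_0^x (\sin t/t)\,dt$ is uniformly bounded on $\mathbb{R}$, and apply either the second mean value theorem for integrals or one integration by parts to conclude that each piece contributes at most a constant multiple of $\sup |\theta'_{a,b}|^{-1}$ over that piece. Differentiating the factorization yields
\[
\theta'_{a,b}(\lambda) = \lambda^{l-1}\bigl[\pm l + \lambda\, Q_{a,b}(\lambda)\bigr],
\]
so that $\lambda^{-(l-1)}$ is of order $p^{-(l-1)} \leq p^{-l}$; combined with the finite bound on the number of monotonicity intervals and summed over all pieces, this will give the required $K_1 p^{-l}$.

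The main obstacle is the handling of critical points of $\theta_{a,b}$, i.e.\ zeros of $\theta'_{a,b}$ in $[p,q]$, where the endpoint bound $|\theta'_{a,b}|^{-1}$ on adjacent pieces is unbounded and the substitution degenerates. Near such a $\lambda_0$, one must replace the substitution argument by a local van der Corput type estimate that exploits the algebraic structure of $\theta_{a,b}$ as a $\{-1,0,1\}$-polynomial with a zero of order exactly $l$ at the origin. The goal is to show that the neighbourhood contribution is absorbed into a universal constant multiple of $p^{-l}$, independently of $a$, $b$, $m$, and $k$; since the interval $[p,q] \subset (\tfrac12, 0.64)$ is strictly separated from the boundary of the critical range, this local analysis should succeed uniformly.
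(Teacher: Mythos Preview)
Your proposal has a genuine gap that makes the argument break down as a proof of the lemma as stated. The constant $K_1$ must be independent of $k$, but your decomposition into monotonicity intervals of $\theta_{a,b}$ only gives ``at most $k$'' pieces. If each piece contributes a universal constant times $p^{-l}$, summing gives $K_1 \sim k\, p^{-l}$, which is useless here since $k$ is unbounded. Nothing in your outline controls the number of critical points of $\theta_{a,b}$ uniformly in $k$; a degree-$k$ polynomial with $\{-1,0,1\}$ coefficients can in principle oscillate many times on a fixed interval.

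The ``main obstacle'' you identify (critical points of $\theta_{a,b}$) is not a technical detail to be patched with a generic van der Corput bound; it is the heart of the matter, and it is exactly what Solomyak's transversality lemma resolves. The paper does not give a self-contained proof but invokes that lemma: writing $\theta_{a,b}(\lambda)=\lambda^{l}P_{a,b}(\lambda)$ with $P_{a,b}(0)=\pm1$, transversality on $[p,q]\subset(\tfrac12,0.64)$ provides a single $\delta>0$, \emph{independent of $a,b,k$}, such that $|P_{a,b}(\lambda)|<\delta$ forces $|P'_{a,b}(\lambda)|>\delta$. This simultaneously (i) bounds the number of sign changes / monotonicity intervals of $P_{a,b}$ on $[p,q]$ by a universal constant, and (ii) gives a uniform lower bound on the derivative precisely on the set where your substitution would otherwise degenerate. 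With $\delta$ in hand, one splits $[p,q]$ into $\{|P_{a,b}|\ge\delta\}$, where $|\theta_{a,b}|^{-1}\le\delta^{-1}p^{-l}$ directly, and $\{|P_{a,b}|<\delta\}$, where the change of variables you describe now works with a Jacobian bounded by $\delta^{-1}$; both pieces give $\lesssim p^{-l}$ with constants independent of $k$. Your derivative formula $\theta'_{a,b}=\lambda^{l-1}[\pm l+\lambda Q]$ is also not quite right (the bracket is $lP_{a,b}+\lambda P'_{a,b}$, which need not be bounded away from zero without transversality), so the claimed $\sup|\theta'_{a,b}|^{-1}\lesssim p^{-(l-1)}$ is unjustified on its own.

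In short: the factorisation and the substitution idea are correct and match the standard argument, but the uniform-in-$k$ control you need is precisely the content of the transversality lemma, which you have not used and which cannot be replaced by the degree bound on $\theta'_{a,b}$.
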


It is intuitively clear that Lemma~\ref{lem:transint} follows from
Solomyak's transversality lemma \cite{solomyak}. Details on how to
prove this, are available in \cite{solomyak}, where it is shown how it
follows from a lemma in \cite{palis-yoccoz}, that is called Lemma~2.2
in \cite{solomyak}.

By a change of order of integration we have that
\[
\int_p^q J_2 (\lambda) \, \mathrm{d} \lambda \leq L_1 + L_2,
\]
where
\begin{align*}
  L_1 &= \frac{1}{4^{k+1}} \sum_{l = 0}^{k} \sum_{\substack{a,b \in
      \Sigma_{k,l} \\ l(a,b) = l}} \biggl(\sum_{m = 2}^{2^{\alpha k}}
  J_{a,b,m} \bigl( (m - 1)^{2s - 1} - m^{2s - 1} \bigr)
  \\ & \hspace{4cm} + J_{a,b,1+2^{\alpha k}} (2^{\alpha k})^{2s-1} -
  J_{a,b,1} \biggr), \\ L_2 &= \frac{1}{4^{k+1}} \sum_{a = b \in
    \Sigma_k } \sum_{m = 1}^{2^{\alpha k}} \int_p^q \int_m^{m + 1}
  \cos(\theta_{a, b} (\lambda) \xi) P_k (\lambda, \xi) m^{2s-1} \,
  \mathrm{d}\xi \mathrm{d}\lambda.
\end{align*}
The first part is estimated with use of Lemma~\ref{lem:transint}. We
have
\begin{align*}
  L_1 &\leq \frac{1}{4^{k+1}} \sum_{l = 0}^{k} \sum_{a,b \in
    \Sigma_{k,l} } \frac{K_1}{p^{l}} \biggl( \sum_{m = 2}^{2^{\alpha
      k}} \bigl( (m - 1)^{2s - 1} - m^{2s - 1} \bigr) + (2^{\alpha
    k})^{2s-1} + 1 \biggr), \\ & = \frac{1}{4^{k+1}} \sum_{l = 0}^k
  2^{k+1} 2^{k+1-l} 2 K_1 p^{-l} \leq \frac{2 K_1}{1 - (2p)^{-1}}.
\end{align*}

We now turn to the estimate of $L_2$. If $a = b$, then $\theta_{a, b}
= 0$, and so
\begin{align*}
L_2 &= \frac{1}{2^{k+1}} \sum_{m = 1}^{2^{\alpha k}} \int_p^q
\int_m^{m + 1} P_k (\lambda, \xi) m^{2s-1} \, \mathrm{d}\xi
\mathrm{d}\lambda \\ &= \frac{1}{6^{k+1}} \sum_{c,d \in \Sigma_k}
\sum_{m=1}^{2^{\alpha k}} \int_p^q \int_m^{m+1} \cos(\theta_{c,d} (\lambda)
\xi) m^{2s - 1} \, \mathrm{d}\xi \mathrm{d}\lambda
\\ &=\frac{1}{2^{k+1}} L_1 + \frac{1}{6^{k+1}} \sum_{c=d \in \Sigma_k}
\sum_{m=1}^{2^{\alpha k}} \int_p^q \int_m^{m+1} \cos(\theta_{c,d} (\lambda)
\xi) m^{2s - 1} \, \mathrm{d}\xi \mathrm{d} \lambda \\ &\leq
\frac{1}{2^{k+1}} L_1 + \frac{1}{6^{k+1}} 2^{k+1}
\sum_{m=1}^{2^{\alpha k}} (q-p) m^{2s - 1} \\ &\leq \frac{1}{2^{k+1}}
L_1 + \frac{1}{4^{k+1}} K_2 2^{2 \alpha s k},
\end{align*}
where $K_2$ is a constant that does not depend on $k$.

Putting the estimates of $L_1$ and $L_2$ together, we find that
\begin{equation} \label{eq:estJ2}
  \int_p^q J_2 ( \lambda) \, \mathrm{d} \lambda \leq \frac{3 K_1}{1 -
    (2p)^{-1}} + \frac{K_2}{4} 2^{(2 \alpha s - 2) k} \leq K_3 (1 + 2^{(2
    \alpha s - 2) k}),
\end{equation}
where $K_3$ is a constant that does not depend on $k$.

We will now estimate $J_3 (\lambda)$. In the same way as for $J_2$, we
have that
\[
\int_p^q J_3 (\lambda) \, \mathrm{d} \lambda \leq M_1 + M_2,
\]
where
\begin{align*}
  M_1 &= \frac{16^{\alpha k}}{4^{k+1}} \sum_{l = 0}^{k}
  \sum_{\substack{a,b \in \Sigma_{k,l} \\ l(a,b) = l}} \biggl(\sum_{m
    = 2^{\alpha k}}^\infty J_{a,b,m} \bigl( (m - 1)^{2s - 5} - m^{2s -
    5} \bigr) \\ & \hspace{4cm} + J_{a,b,1+2^{\alpha k}} (2^{\alpha
    k})^{2s-5} - J_{a,b,1} \biggr), \\ M_2 &= \frac{16^{\alpha
      k}}{4^{k+1}} \sum_{a = b \in \Sigma_k} \sum_{m = 2^{\alpha
      k}}^\infty \int_p^q \int_m^{m + 1} \cos(\theta_{a, b} (\lambda) \xi)
  P_k (\lambda, \xi) m^{2s-5} \, \mathrm{d}\xi \mathrm{d}\lambda.
\end{align*}
The first part is again estimated with use of
Lemma~\ref{lem:transint}. We have
\begin{align*}
  M_1 &\leq \frac{16^{\alpha k}}{4^{k+1}} \sum_{l = 0}^{k} \sum_{a,b
    \in \Sigma_{k,l}} \frac{K_1}{p^{l}} \biggl( \sum_{m = 2^{\alpha k}}^\infty
    \bigl( (m - 1)^{2s - 5} - m^{2s - 5} \bigr) + (2^{\alpha
      k})^{2s-5} + 1 \biggr), \\ & = \frac{16^{\alpha k}}{4^{k+1}}
    \sum_{l = 0}^k 2^{k+1} 2^{k+1-l} 2 K_1 p^{-l} \leq \frac{2
      K_1}{1 - (2p)^{-1}}.
\end{align*}

We proceed with the estimate of $M_2$. If $a = b$, then $\theta_{a, b}
= 0$, and so
\begin{align*}
M_2 &= \frac{16^{\alpha k}}{2^{k+1}} \sum_{m = 2^{\alpha k}}^\infty
\int_p^q \int_m^{m+1} P_k (\lambda, \xi) m^{2s - 5} \, \mathrm{d}\xi
\mathrm{d} \lambda \\ &= \frac{16^{\alpha k}}{6^{k + 1}} \sum_{c, d
  \in \Sigma_k} \sum_{m=2^{\alpha k}}^\infty \int_p^q \int_m^{m+1}
\cos(\theta_{c,d} (\lambda) \xi) m^{2s - 5} \, \mathrm{d} \xi \mathrm{d}
\lambda \\ &= \frac{1}{2^{k+1}} M_1 + \frac{16^{\alpha k}}{6^{k + 1}}
\sum_{c = d \in \Sigma_k} \sum_{m = 2^{\alpha k}}^\infty \int_p^q
\int_m^{m+1} \cos( \theta_{c,d} (\lambda) \xi) m^{2s - 5} \, \mathrm{d} \xi
\mathrm{d} \lambda \\ &\leq \frac{1}{2^{k+1}} M_1 + \frac{16^{\alpha
    k}}{6^{k+1}} 2^{k+1} \sum_{m=2^{\alpha k}}^\infty (q-p) m^{2s - 5}
\\ &\leq \frac{1}{2^{k + 1}} M_1 + \frac{1}{4^{k+1}} K_4 2^{2 \alpha s
  k}.
\end{align*}
where $K_4$ does not depend on $k$. 

The estimates of $M_1$ and $M_2$ imply that
\begin{equation} \label{eq:estJ3}
  \int_p^q J_3 ( \lambda) \, \mathrm{d} \lambda \leq \frac{3 K_1}{1 -
    (2p)^{-1}} + K_4 2^{(2 s \alpha - 2) k} \leq K_5 (1 + 2^{(2 \alpha
    s - 2) k}),
\end{equation}
where $K_5$ is a constant.

From \eqref{eq:estJ1}, \eqref{eq:estJ2} and \eqref{eq:estJ3}, we
conclude that
\[
\int_p^q \int |\hat{h}_{\lambda, k} (\xi)|^4 |\xi|^{2s - 1} \,
\mathrm{d} \xi \mathrm{d} \lambda \leq \frac{1}{s} + 2 (K_3 +
K_5) (1 + 2^{(2\alpha s - 2)k}).
\]
Hence, for almost all $\lambda \in [p,q]$, there are constants $C =
C(\lambda)$ and $D = D(\lambda)$ such that
\[
\int |\hat{h}_{\lambda, k} (\xi)|^4 |\xi|^{2s - 1} \, \mathrm{d} \xi <
C 2^{2(\alpha s - 1) k} + D,
\]
holds for infinitely many $k$. Since $p$ and $q$ are arbitrary this
proves Proposition~\ref{pro:fourierint}.

\subsection{Convolutions} \label{sec:convolution}

We have proved the statement of Theorem~\ref{the:bernoulli} for almost
all $\lambda \in (\frac{1}{2}, 0.64)$. In this section we are going to
show the result for almost all $\lambda \in (\frac{1}{2}, 1)$. 

Let $\lambda$ be such that $\lambda^2 \in (\frac{1}{2}, 0.64)$. We
define the measure $\mu_{\alpha, \lambda, 2k+1}^{(2)}$ as the
convolution of $\mu_{2\alpha, \lambda^2, k, c}$ and $\mu_{2 \alpha,
  \lambda^2, k, c} \circ S_1^{-1}$, where $S_1$ is the contraction
$S_1 (x) = \lambda x$. Hence, if we let $h_{\alpha, \lambda,
  2k+1}^{(2)}$ denote the density of $\mu_{\alpha, \lambda,
  2k+1}^{(2)}$, we have $h_{\alpha, \lambda, 2k+1}^{(2)} = h_{2\alpha,
  \lambda^2, k, c} * (h_{2\alpha, \lambda^2, k, c} \circ S_1^{-1})$.

It follows, if we choose the constant $c$ appropriately, that the
measure $\mu_{\alpha, \lambda, 2k+1}^{(2)}$ is absolutely continuous
with respect to $\mu_{\alpha, \lambda, 2k+1}$, that is, the support of
$\mu_{\alpha, \lambda, 2k+1}^{(2)}$ is in $E_{\lambda, 2k + 1}
(\alpha)$. This makes it natural to try to apply
Theorem~\ref{the:frostman} to the measures $\mu_{\alpha, \lambda,
  2k+1}^{(2)}$. Moreover, it is not difficult to see that
$\mu_{\alpha, \lambda, 2k+1}^{(2)}$ converges weakly to $\mu_\lambda$,
the distribution of the corresponding Bernoulli convolution, as $k \to
\infty$.

We will now prove the following result for the measures $\mu_{\alpha,
  \lambda, 2k+1}^{(2)}$, analogous to
Proposition~\ref{pro:energy_of_bernoulli}.

\begin{proposition} \label{pro:convolution}
  Let $\alpha s < 1$. If $\lambda^2 \in (\frac{1}{2}, 0.64)$, then,
  almost surely, there exists a constant $C$ such that
  \[
  \iint |x - y|^{-s} \, \mathrm{d} \mu_{\alpha, \lambda, 2k+1}^{(2)}
  (x) \mathrm{d} \mu_{\alpha, \lambda, 2k+1}^{(2)} (y) \leq C,
  \]
  holds for infinitely many $k$.
\end{proposition}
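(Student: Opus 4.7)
The plan is to reduce the energy integral for $\mu^{(2)}_{\alpha, \lambda, 2k+1}$ to the Fourier estimate already established in Proposition~\ref{pro:fourierint}, used at the rescaled parameters $(2\alpha, \lambda^2)$ and exponent $s/2$.

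First, I would compute the Fourier transform of $h^{(2)}_{\alpha, \lambda, 2k+1}$. Because convolution becomes multiplication under the Fourier transform, and because the pushforward under $S_1(x) = \lambda x$ rescales the Fourier transform by $\lambda$, one has
\[
\hat h^{(2)}_{\alpha, \lambda, 2k+1}(\xi) = \hat h_{2\alpha, \lambda^2, k, c}(\xi) \, \hat h_{2\alpha, \lambda^2, k, c}(\lambda \xi).
\]
Combining this with the Fourier representation
\[
\iint |x-y|^{-s} \, \mathrm{d}\mu(x) \mathrm{d}\mu(y) = c_s \int |\hat h(\xi)|^2 |\xi|^{s-1} \, \mathrm{d}\xi
\]
already used in Section~\ref{sec:fourier}, the energy for $\mu^{(2)}_{\alpha, \lambda, 2k+1}$ equals
\[
c_s \int |\hat h_{2\alpha, \lambda^2, k, c}(\xi)|^2 \, |\hat h_{2\alpha, \lambda^2, k, c}(\lambda \xi)|^2 \, |\xi|^{s-1} \, \mathrm{d}\xi.
\]

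Next I would apply the Cauchy--Schwarz inequality to split the two factors, followed by the substitution $u = \lambda \xi$ in the second of the two resulting integrals. The two integrals differ only by a factor $\lambda^{-s}$, so the energy is bounded by a constant multiple of
\[
\int |\hat h_{2\alpha, \lambda^2, k, c}(\xi)|^4 \, |\xi|^{s-1} \, \mathrm{d}\xi.
\]
The pointwise comparison $2\pi |\hat h_{2\alpha, \lambda^2, k, c}| \leq |g_{2\alpha, \lambda^2, k}|$, which still holds in the $c$-dependent setting of Remark~\ref{rem:convolution}, reduces the task to estimating
\[
\int |g_{2\alpha, \lambda^2, k}(\xi)|^4 \, |\xi|^{s-1} \, \mathrm{d}\xi.
\]

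Finally, I would invoke Proposition~\ref{pro:fourierint} with $\alpha$ replaced by $2\alpha$, $\lambda$ replaced by $\lambda^2$, and $s$ replaced by $s/2$, so that the exponent $2s - 1$ becomes $s - 1$ here. The condition $\alpha s < 1$ is exactly what $\alpha' s' < 1$ becomes under this substitution, and the hypothesis $\lambda^2 \in (\tfrac{1}{2}, 0.64)$ matches the range of the source proposition. One then obtains, for almost every such $\lambda$, a bound of the form $C 4^{(\alpha s - 1)k} + D$ for infinitely many $k$, and since $\alpha s < 1$ the exponential factor stays bounded, yielding the desired uniform bound along a subsequence. The only real obstacle is bookkeeping: one must make sure that the rescaling by $S_1$ on the Fourier side and the parameter matching in Proposition~\ref{pro:fourierint} line up consistently, and that the $c$-dependent version of that proposition promised by Remark~\ref{rem:convolution} is genuinely available. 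No new estimate beyond the earlier sections is required.
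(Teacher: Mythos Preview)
Your proposal is correct and follows essentially the same route as the paper: express the energy via the Fourier side, factor $\hat h^{(2)}_{\alpha,\lambda,2k+1}$ using the convolution structure, apply Cauchy--Schwarz to reduce to $\int |\hat h_{2\alpha,\lambda^2,k,c}|^4 |\xi|^{s-1}\,\mathrm{d}\xi$, and then invoke Proposition~\ref{pro:fourierint} (in the $c$-dependent form of Remark~\ref{rem:convolution}) at parameters $(2\alpha,\lambda^2,s/2)$. Your writeup is in fact slightly more explicit than the paper's, spelling out the substitution $u=\lambda\xi$ and the parameter matching, but the argument is the same.
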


Proposition~\ref{pro:convolution}, together with
Proposition~\ref{pro:est_on_bernoulli}, implies the statement of
Theorem~\ref{the:bernoulli} for almost all $\lambda$ such that
$\lambda \in (\frac{1}{2}, 0.64)$ or $\lambda^2 \in (\frac{1}{2},
0.64)$, that is for almost all $\lambda \in (\frac{1}{2}, \frac{4}{5})
= (\frac{1}{2}, \sqrt{0.64})$.

\begin{proof}[Proof of Proposition~\ref{pro:convolution}]
  To prove Proposition~\ref{pro:convolution}, we do as in
  Section~\ref{sec:fourier}, and write
  \[
  \iint |x - y|^{-s} \, \mathrm{d} \mu_{\alpha,\lambda,2k+1}^{(2)} (x)
  \mathrm{d}\mu_{\alpha,\lambda, 2k+1}^{(2)} (y) = c_s \int
  |\hat{h}_{\alpha,\lambda,2k+1}^{(2)} (\xi)|^2 |\xi|^{s-1} \,
  \mathrm{d} \xi.
  \]
  Now, we use the fact that $h_{\alpha, \lambda, 2k+1}^{(2)} =
  h_{2\alpha, \lambda^2, k, c} * (h_{2\alpha, \lambda^2, k, c} \circ
  S_1^{-1})$, or equivalently $\hat{h}_{\alpha, \lambda, 2k+1}^{(2)} =
  \hat{h}_{2\alpha, \lambda^2, k, c} \cdot \widehat{(h_{2\alpha,
      \lambda^2, k, c} \circ S_1^{-1})}$, together with the
  Cauchy--Bunyakovsky--Schwarz inequality to conclude that
  \[
  \iint |x - y|^{-s} \, \mathrm{d} \mu_{\alpha,\lambda,2k+1}^{(2)} (x)
  \mathrm{d}\mu_{\alpha,\lambda, 2k+1}^{(2)} (y) \leq c_{s,\lambda}
  \int |\hat{h}_{2 \alpha, \lambda^2, k} (\xi)|^4 |\xi|^{s-1} \,
  \mathrm{d} \xi,
  \]
  where $c_{s,\lambda}$ is a constant that only depends on $s$ and
  $\lambda$. By Proposition~\ref{pro:fourierint} we now get that
  \[
  \iint |x - y|^{-s} \, \mathrm{d} \mu_{\alpha,\lambda,2k+1}^{(2)} (x)
  \mathrm{d}\mu_{\alpha,\lambda, 2k+1}^{(2)} (y) \leq c_{s,\lambda} (C
  4^{(\alpha s - 1) k} + D).
  \]
  This clearly implies Proposition~\ref{pro:convolution}.
\end{proof}

We can now consider higher powers of convolutions of scalings of the
measures $\mu_{\alpha, \lambda, k}$. Similarly as was done in
Proposition~\ref{pro:convolution}, we can conclude the statement of
Theorem~\ref{the:bernoulli} for almost all $\lambda \in (\frac{1}{2},
\sqrt[m]{\frac{4}{5}} ) = (\frac{1}{2}, \sqrt[m+1]{0.64})$, if we
prove, instead of Proposition~\ref{pro:fourierint}, an estimate on
\[
\int |g_{\alpha,\lambda,k} (\xi)|^{2^{m+2}} |\xi|^{2^{m+1} s - 1} \,
\mathrm{d}\xi,
\]
analogous to Proposition~\ref{pro:fourierint}. This can be done for
any $m$ in a straight-forward way, similar to the proof of
Proposition~\ref{pro:fourierint}, but would be somewhat lengthy and
cumbersome. We will therefore leave out the details, since the proof
of Proposition~\ref{pro:fourierint} contains all the necessary
ideas. Since $\sqrt[m]{4/5} \to 1$ as $m \to \infty$, this concludes
the proof of Theorem~\ref{the:bernoulli}.

\end{document}